\documentclass[12pt,twoside]{amsart} 
\title{Surfaces of globally $F$-regular and $F$-split type}
\author[Y. Gongyo]{Yoshinori  Gongyo}
\address{Graduate School of Mathematical Sciences, 
the University of Tokyo, 3-8-1 Komaba, Meguro-ku, Tokyo 153-8914, Japan.}
\email{gongyo@ms.u-tokyo.ac.jp}
\address{Department of Mathematics, Imperial College London, 180 Queen's Gate, London SW7 2AZ, UK}
\email{y.gongyo@imperial.ac.uk}
\author[S. Takagi]{Shunsuke  Takagi}
\address{Graduate School of Mathematical Sciences, 
the University of Tokyo, 3-8-1 Komaba, Meguro-ku, Tokyo 153-8914, Japan.}
\email{stakagi@ms.u-tokyo.ac.jp}
\subjclass[2010]{Primary 14J32, 14J45; Secondary 14B05, 13A35}
\keywords{Globally $F$-regular varieties, globally $F$-split varieties, varieties of Fano type, varieties of Calabi--Yau type.}
\dedicatory{Dedicated to Professor~Robert~Lazarsfeld on the~occasion of his~sixtieth~birthday.}

\newcommand{\Supp}[0]{{\operatorname{Supp}}}

\newcommand{\Spec}[0]{{\operatorname{Spec}}}

\newcommand{\Q}{\mathbb{Q}} 
\newcommand{\Z}{\mathbb{Z}} 

\usepackage{pxfonts}

\usepackage{latexsym}
\usepackage{amsmath}
\usepackage{amssymb}
\usepackage{amsthm}
\usepackage{amscd}
\usepackage{enumerate} 
\usepackage{amssymb} % \mathfrak , \mathbb
\usepackage{mathrsfs}          % \mathscr  
\usepackage[all]{xy}
\usepackage{color}
\usepackage{xypic}

%%%
\usepackage[%
 colorlinks=true]{hyperref}
%%%

\newtheorem{thm}{Theorem}[section]

\newtheorem{prop}[thm]{Proposition}

\newtheorem{lem}[thm]{Lemma}

\newtheorem{cl}[thm]{Claim}
 
\theoremstyle{definition}

\newtheorem{defi}[thm]{Definition}

\newtheorem{eg}[thm]{Example}

\newtheorem{rem}[thm]{Remark}

\newtheorem*{ack}{Acknowledgments}

\begin{document}
\bibliographystyle{amsalpha+}
 
 \maketitle
 
 \begin{abstract}
We prove that normal projective surfaces of dense globally $F$-split type (resp. globally $F$-regular type) are of Calabi--Yau type (resp. Fano type). 
 \end{abstract}

%\tableofcontents

\section{Introduction}

The notion of globally $F$-split varieties was introduced by Mehta and Ramanathan \cite{MeR} in the 1980s to study cohomology of Schubert varieties. 
Later, globally $F$-regular varieties, a special class of globally $F$-split varieties, were introduced by K.~Smith \cite{Sm} who drew inspiration from the tight closure theory. 
Global $F$-splitting and global $F$-regularity both are global properties of a projective variety over a  field of positive characteristic as the name suggests, and they impose strong conditions on the structure of the variety. 
For example, Kodaira-type vanishing theorems, which generally do not hold in positive characteristic, hold on such a variety.  
Those notions have many applications to representation theory and birational geometry in positive characteristic. 

Using reduction to positive characteristic, global $F$-splitting and global $F$-regularity make sense in characteristic zero as well: 
let $X$ be a normal projective variety over an algebraically closed field of characteristic zero. 
$X$ is said to be of globally $F$-regular type (resp. dense globally $F$-split type) if its modulo $p$ reduction is globally $F$-regular for almost all $p$ (resp. infinitely many $p$). 
In this paper, we consider a geometric interpretation of these properties, especially focusing on the surface case. 

We say that $X$ is of Fano type (resp. Calabi--Yau type) if there exists an effective $\Q$-divisor $\Delta$ on $X$ such that $-(K_X+\Delta)$ is ample (resp. $\Q$-linearly trivial) and $(X, \Delta)$ is klt (resp. log canonical). 
Schwede and Smith then asked in \cite[Question 7.1]{schsmith-logfano} whether varieties of globally $F$-regular type (resp. dense globally $F$-split type) are of Fano type (resp. Calabi--Yau type) or not. 
We studied this question in an earlier paper \cite{gost} by the same authors together with Okawa and Sannai, and gave an affirmative answer when $X$ is a Mori dream space. 
However, being a Mori dream space is a very strong condition that is not generally satisfied for varieties of dense globally $F$-split type.  
It is, therefore, natural to ask whether the result still holds without the assumption of being a Mori dream space. 

Curves of globally $F$-regular type are nothing but rational curves.
Smooth curves of globally $F$-split type are elliptic curves besides rational curves.
Thus, the first nontrivial case is when $X$ is a surface, and we settle this case. 
The following is our main result. 

\begin{thm}[=Theorems \ref{F-reg surface} and \ref{F-split surface}]\label{main1}
Let $S$ be a normal projective surface over an algebraically closed field of characteristic zero. 
If $S$ is of dense globally $F$-split type $($resp. globally $F$-regular type$)$, then it is of Calabi--Yau type $($resp. Fano type$)$. 
\end{thm}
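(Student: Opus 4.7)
My plan is to treat the two halves of the theorem separately but in parallel, each time first converting the positive-characteristic hypothesis into classical singularity and positivity data on $S$, and then using the two-dimensional MMP and the classification of surfaces to produce the required boundary $\Delta$.

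For the globally $F$-regular half, the standard reduction-mod-$p$ dictionary yields that $(S,0)$ is klt and $-K_S$ is big. A normal projective klt surface with $-K_S$ big is rational (by a Riemann--Roch plus rational-connectedness argument), and for such surfaces the Cox ring is known to be finitely generated, so $S$ is a Mori dream space; this can be proved either by invoking classical finite-generation results for rational surfaces or by a direct surface-MMP-with-scaling argument using the Zariski decomposition of $-K_S$. Once $S$ is exhibited as a Mori dream space, our earlier theorem from \cite{gost} applies directly and shows that $S$ is of Fano type.

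For the dense $F$-split half, the analogous correspondence together with upper semicontinuity of cohomology, applied to the nonvanishing $H^0(S_p,(1-p)K_{S_p})\neq 0$ holding for infinitely many $p$, gives that $(S,0)$ is log canonical and $-K_S$ is $\mathbb{Q}$-effective; in particular $\kappa(S)\in\{-\infty,0\}$. I would then split by Kodaira dimension. If $\kappa(S)=0$, I would run the $K_S$-MMP to a minimal model $\bar S$ with $K_{\bar S}\sim_{\mathbb{Q}}0$ and transport the trivial boundary on $\bar S$ back to $S$: the exceptional contributions furnish an effective $\Delta$ on $S$ with $K_S+\Delta\sim_{\mathbb{Q}}0$, and log canonicity of $(S,\Delta)$ follows from the lc-preserving character of each MMP step. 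If $\kappa(S)=-\infty$, then $S$ is birational to a Mori fiber space whose base curve has genus $\leq 1$, because $F$-splitting of the total space forces $F$-splitting of the base and hence rules out bases of genus $\geq 2$; thus the minimal model is $\mathbb{P}^2$, a (possibly singular) Hirzebruch surface, or a ruled surface over an elliptic curve. On each of these minimal models one can write down an explicit log canonical boundary $\Delta_0$ with $K+\Delta_0\sim_{\mathbb{Q}}0$ (three lines on $\mathbb{P}^2$, two sections plus two fibers on a Hirzebruch surface, two disjoint sections on an elliptic ruled surface), and then pull $\Delta_0$ back along the MMP to obtain the desired $\Delta$ on $S$.

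The main obstacle is the elliptic-ruled subcase of $\kappa(S)=-\infty$. Here $\mathbb{Q}$-effectivity of $-K_S$ alone is not enough to guarantee that any particular member of $|-mK_S|$ gives rise to a log canonical pair, since the candidate bisections of the ruling can fail to be reduced. To get past this I expect to have to use the $F$-splitting hypothesis in a more structural way than through its numerical shadow, extracting compatibly split divisors from the splitting maps $F_*\mathcal{O}_{S_p}\to\mathcal{O}_{S_p}$ and reducing them to an effective lc boundary on $S$ in characteristic zero. A secondary technical nuisance is the discrepancy bookkeeping in the $\kappa(S)=0$ case needed to confirm that the pulled-back boundary really remains log canonical through all the MMP steps, but this should be routine compared to the elliptic-ruled analysis.
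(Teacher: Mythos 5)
Your proposal contains genuine gaps in both halves, and in each case the gap sits exactly where the theorem's real content lies. For the globally $F$-regular half, the endgame (exhibit $S$ as a Mori dream space and invoke \cite{gost}) is a legitimate alternative to the paper's direct construction of a klt weak del Pezzo pair, but it hinges on the assertion that the ``standard reduction-mod-$p$ dictionary'' gives bigness of $-K_S$ in characteristic zero. It does not: semicontinuity goes the wrong way under specialization, since $h^0(S_{\mu},-mK_{S_{\mu}})\ge h^0(S,-mK_S)$ on a dense open set of $\mu$ depending on $m$, so bigness of $-K_{S_{\mu}}$ on the closed fibres does not descend to the generic fibre. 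The paper's substitute is Proposition \ref{nef-ness}: the Zariski decomposition $-K_S=P+N$ commutes with reduction mod $p$ (this already needs the nonvanishing Lemma \ref{non-van} applied to the pair $(S,N)$, whose log canonicity is extracted by comparing $N$ with the boundary supplied by Lemma \ref{F-split lemma}(2) in characteristic $p$), whence $P^2=P_{\bar\mu}^2>0$ and $-K_S$ is big. Without this, or a substitute such as Okawa's mixed-characteristic deformation argument \cite{okawa_phd}, your first half is incomplete.

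For the dense $F$-split half there are two problems. First, the same semicontinuity issue: $H^0(S_p,(1-p)K_{S_p})\neq 0$ for a merely dense (not open) set of primes, with a twist that varies with $p$, does not yield $\kappa(-K_S)\ge 0$; the paper gets $\kappa(-(K_S+N))\ge 0$ from Shokurov's nonvanishing (Lemma \ref{non-van}) and the integral nonvanishing $H^0(S,-K_S)\neq 0$ from Laface--Testa. Second, and more seriously, your plan for $\kappa(S)=-\infty$ with $S$ rational---pull an explicit lc boundary on $\mathbb{P}^2$ or a Hirzebruch surface back along the MMP---cannot work: the crepant pullback of a boundary under a point blowup has coefficient $1-\mathrm{mult}$ on the exceptional curve, so it is not effective for blowups at points off the boundary and not lc at points of multiplicity $\ge 3$. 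Indeed the blowup of $\mathbb{P}^2$ at nine points on a cuspidal cubic has $-K$ nef with $\kappa(-K)=0$ and is \emph{not} of Calabi--Yau type (Remark \ref{rem-fk}), so no argument using only log canonicity of $(S,0)$ and $\mathbb{Q}$-effectivity of $-K_S$ can succeed. You correctly sense that the splitting must be used structurally, but you place the difficulty in the elliptic-ruled case, which is in fact the easy one (Lemma \ref{non-comp} plus Example \ref{elliptic ruled} dispose of it); the hard case is the rational one, where the paper reduces mod $p$ to a minimal elliptic fibration, identifies the anticanonical member as an indecomposable curve of canonical type, and uses $\mathrm{Pic}^0$ of such curves (Lemma \ref{pic0}) together with the splitting section of $H^0((1-p)K)$ to force type $I_n$ and hence normal crossings. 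That mechanism is absent from your proposal.
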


One of the key ingredients in the proof is to show that taking the Zariski decomposition of the anti-canonical divisor of a surface of dense globally $F$-split type commutes with reduction modulo $p$. 
The globally $F$-regular case of Theorem \ref{main1} immediately follows from this fact. 

The proof of the globally $F$-split case is much more involved. 
First, by taking the minimal resolution, we may assume that $S$ is smooth. If $S$ is not rational, then the problem can be reduced to whether the projective bundle of a rank $2$ vector bundle of degree zero over an elliptic curve is globally $F$-split.  
This question was already answered by Mehta and Srinivas \cite{MS}, so we suppose that  $S$ is rational. 
Using the Zariski decomposition of $-K_S$ and a result of Laface and Testa \cite{lt} on rational surfaces, we can reduce to the case where $-K_S$ is nef and there exists an effective divisor $D$ linearly equivalent to $-K_S$.  
We can assume in addition that the modulo $p$ reduction $S_p$ of $S$ is a minimal elliptic surface and the reduction $D_p$ of $D$ is an indecomposable curve of canonical type. 
We then make use of the classification of singular fibers (Kodaira's table) to see that if $D_p$ is not of type $I_n$, then $(S_p, D_p)$ has to be globally $F$-split\footnote{The notions of global $F$-splitting and global $F$-regularity can be extended to a pair of a normal projective variety and a divisor. See Definition \ref{defi-gFreg} for the details.} for infinitely many $p$.
Finally, since a fiber of type $I_n$ is a normal crossing divisor and global $F$-splitting implies log canonicity (see \cite[Theorem 3.9]{hw}), we conclude that  $(S, D)$ is log canonical, that is, $S$ is of Calabi-Yau type. 

We remark that the globally $F$-regular case of Theorem \ref{main1} was first proved by Okawa \cite{okawa_phd} using the deformation theory in mixed characteristic, but our proof is more geometric.  
Also, during the preparation of this manuscript, Hwang and Park \cite{hp} announced that they proved the globally $F$-regular case with a different method. 
Their method relies on Sakai's results \cite{sakai2}, \cite{sakai3} and \cite{sakai}. 
Some (not all) results in Section \ref{main section} may follow from his results, but our argument is substantially different from his argument.  
 
 \begin{ack} 
 The authors wish to thank Professors Antonio Laface and Vasudevan Srinivas for answering their questions. They also thank Osamu Fujino, Atsushi Ito and Ching-Jui Lai for careful reading of this manuscript and for helpful comments. 
They are grateful to Shinnosuke Okawa, Akiyoshi Sannai and Taro Sano for valuable conversations. 

The first author was partially supported by the Grand-in-Aid for Research Activity Start-Up $\sharp$24840009 from JSPS and Research expense from the JRF fund. The second author was partially supported by Grant-in-Aid for Young Scientists (B) 23740024 from JSPS. 
This material is based upon work supported by the National Science Foundation under Grant No.~0932078 000, while the second author was in residence at the Mathematical Science Research Institute in Berkeley, California, during the spring semester 2013 of the program Commutative Algebra.  
 \end{ack}

We will freely use the standard notations in \cite{komo}. 

%%%%%%%%%%%%%%%%%%%%%%%%%%%%%%%%%%%%%%%%%%%%%%%%%%%%%%%%%%%%%%%%%%%%%%%%%%%%%%%%%%%%%%%%%%%%%%%%%%%%%%%%%%%%%%
\section{Preliminaries}\label{preliminaries}

\subsection{Varieties of Fano type and of Calabi--Yau type}
In this paper, we use the following terminology.  

\begin{defi}[{cf.~\cite[Definition 2.34]{komo}, \cite[Remark 4.2]{schsmith-logfano}}]\label{sing of pairs}
Let $X$ be a normal variety over a field $k$ of \textit{arbitrary characteristic} and $\Delta$ be an effective $\mathbb{Q}$-divisor on $X$ such that $K_X+\Delta$ is $\mathbb{Q}$-Cartier. 
Let $\pi: \widetilde{X} \to X$ be a birational morphism from a normal variety $\widetilde{X}$.
Then we can write 
$$K_{\widetilde{X}}=\pi^*(K_X+\Delta)+\sum_{E}a(E, X, \Delta) E,$$ 
where $E$ runs through all the distinct prime divisors on $\widetilde{X}$ and the $a(E, X, \Delta)$ are rational numbers. 
We say that the pair $(X, \Delta)$ is \textit{log canonical} (resp. \textit{klt}) if $a(E, X, \Delta) \ge -1$ (resp. $a(E, X, \Delta) >-1$) for every prime divisor $E$ over $X$.  
If $\Delta=0$, we simply say that $X$ has only log canonical singularities (resp. log terminal  singularities). 
\end{defi}

\begin{defi}[cf. {\cite[Lemma-Definition 2.6]{prokshok-mainII}}]\label{Fano pair}Let $X$ be a projective normal variety over a field and $\Delta$ be an effective $\mathbb{Q}$-divisor on $X$ such that $K_X+\Delta$ is $\mathbb{Q}$-Cartier. 
\begin{enumerate}[(i)]
\item We say that $(X,\Delta)$ is {\em log Fano} if $-(K_X+\Delta)$ is ample and $(X, \Delta)$ is klt. 
We say that $X$ is of {\em Fano type} if there exists an effective $\mathbb Q$-divisor $\Delta$ on $X$ such that $(X,\Delta)$ is log Fano. 
\item  We say that $(X,\Delta)$ is {\em log Calabi-Yau} if $K_X+\Delta \sim_{\mathbb{Q}}0$ and $(X, \Delta)$ is log canonical. 
We say that $X$ is of \textit{Calabi--Yau type} if 
there exists an effective $\mathbb{Q}$-divisor $\Delta$ on $X$ such that $(X, \Delta)$ is log Calabi-Yau.  
\end{enumerate}
\end{defi}

\begin{rem}\label{rm-weakfano} 
Let $X$ be a projective variety over an algebraically closed field of characteristic zero.
If there exists an effective $\mathbb{Q}$-divisor $\Delta$ on $X$ such that $(X,\Delta)$ is klt (resp. log canonical) and $-(K_X+\Delta)$ is nef and big (resp. semi-ample), then $X$ is of Fano type (resp. Calabi-Yau type). The reader is referred to \cite[Lemma-Definition 2.6]{prokshok-mainII} for more details. 
\end{rem}

\begin{lem}[cf.~\cite{gost}]\label{images}
Let $f: X \to Y$ be a birational morphism of normal projective varieties over an algebraically closed field.  
If $X$ is of Fano type $($resp. Calabi--Yau type$)$, then so is $Y$. 
\end{lem}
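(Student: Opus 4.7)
The plan is to set $\Delta_Y := f_* \Delta$ for a suitable boundary $\Delta$ on $X$, push the $\mathbb{Q}$-linear equivalence governing $K_X + \Delta$ down to $Y$ by $f$, and then invoke the negativity lemma to show that the $f$-exceptional correction between $K_X+\Delta$ and $f^*(K_Y + \Delta_Y)$ vanishes identically. Once that is done, discrepancies of $(X,\Delta)$ and $(Y,\Delta_Y)$ along every divisor $E$ over $X$ coincide, so $(Y,\Delta_Y)$ inherits the klt (resp.\ log canonical) property of $(X,\Delta)$.

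In the Calabi--Yau case no preliminary adjustment is needed. Starting from $(X, \Delta)$ log canonical with $K_X + \Delta \sim_{\mathbb{Q}} 0$, set $\Delta_Y := f_* \Delta$. Pushing the $\mathbb{Q}$-linear equivalence forward along the birational morphism $f$ gives $K_Y + \Delta_Y \sim_{\mathbb{Q}} 0$, and in particular $K_Y + \Delta_Y$ is $\mathbb{Q}$-Cartier. The $\mathbb{Q}$-Cartier divisor $F := (K_X + \Delta) - f^*(K_Y + \Delta_Y)$ is then $f$-exceptional, since $f_* F = 0$, and $\mathbb{Q}$-linearly trivial, so both $F$ and $-F$ are $f$-nef. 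Applying the negativity lemma to each of $F$ and $-F$ gives $F \ge 0$ and $-F \ge 0$, hence $F = 0$. Therefore $a(E, Y, \Delta_Y) = a(E, X, \Delta) \ge -1$ for every prime divisor $E$ over $X$, and $Y$ is of Calabi--Yau type.

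In the Fano case, I first perturb $\Delta$ so that $-(K_X + \Delta)$ becomes the pullback of an ample divisor on $Y$. Choose any ample $\mathbb{Q}$-divisor $A$ on $Y$. Since $-(K_X + \Delta)$ is ample, for sufficiently small rational $\epsilon > 0$ the class $-(K_X+\Delta) - \epsilon f^*A$ remains ample; a Bertini argument applied to a basepoint-free linear system representing $m(-(K_X+\Delta) - \epsilon f^*A)$ for $m \gg 0$ then produces a general effective $\mathbb{Q}$-divisor $D$ in that class with $(X, \Delta + D)$ still klt. After replacing $\Delta$ by $\Delta + D$, we have $K_X + \Delta \sim_{\mathbb{Q}} -\epsilon f^* A$. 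Pushing forward gives $K_Y + f_* \Delta \sim_{\mathbb{Q}} -\epsilon A$, which is $\mathbb{Q}$-Cartier with ample negative, and the same negativity-lemma argument forces the $f$-exceptional correction to vanish. Hence $(Y, f_*\Delta)$ is klt with $-(K_Y + f_* \Delta)$ ample, so $Y$ is of Fano type.

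The step requiring the most care is the Bertini-style perturbation in the Fano case: one has to verify that a general member of a suitably chosen basepoint-free linear system on $X$, scaled by $1/m$ with $m$ sufficiently large, preserves the klt property when added to $\Delta$. The remaining ingredients are formal: push-forward commutes with $\mathbb{Q}$-linear equivalence for birational $f$, since $f_* K_X = K_Y$ and $f_* f^* L = L$ for any Cartier divisor $L$ on $Y$; and the negativity lemma, which here collapses to the statement that an $f$-exceptional $\mathbb{Q}$-Cartier divisor which is $\mathbb{Q}$-linearly equivalent to zero must itself vanish.
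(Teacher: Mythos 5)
Your Calabi--Yau argument is exactly the paper's: set $\Delta_Y=f_*\Delta$, push forward the $\mathbb{Q}$-linear equivalence to get $K_Y+\Delta_Y\sim_{\mathbb{Q}}0$ (in particular $\mathbb{Q}$-Cartier), observe that the difference $(K_X+\Delta)-f^*(K_Y+\Delta_Y)$ is $f$-exceptional and relatively numerically trivial, kill it by the negativity lemma in both directions, and conclude that the discrepancies coincide. Where you diverge is the Fano case: the paper does not prove it at all but simply quotes \cite[Theorem 5.5]{gost}, whereas you give a self-contained argument by perturbing $\Delta$ with a general member of $|m(-(K_X+\Delta)-\epsilon f^*A)|$ so that $K_X+\Delta$ becomes $\mathbb{Q}$-linearly equivalent to $-\epsilon f^*A$, after which the same negativity-lemma computation applies. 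This is a standard and correct route, and arguably more transparent than the citation, since for a birational $f$ one does not need the full strength of the cited theorem (which concerns general surjective morphisms). The one caveat is that your Bertini step --- that a general member of a basepoint-free system scaled by $1/m$ preserves the klt condition --- is a characteristic-zero statement, while the lemma is formulated over an arbitrary algebraically closed field; in positive characteristic general members of free linear systems can be badly singular, so strictly speaking your Fano argument as written only covers characteristic zero (which is, admittedly, the only setting in which the paper ever invokes this lemma).
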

\begin{proof}
The Fano type case is a special case of \cite[Theorem 5.5]{gost}, so we only prove the Calabi--Yau type case. 

Suppose that $X$ is of Calabi--Yau type, that is, there exists an effective $\mathbb{Q}$-divisor $\Delta_X$ on $X$ such that $K_X+\Delta_X \sim_{\mathbb{Q}}0$ and $(X, \Delta_X)$ is log canonical. 
Letting $\Delta_Y:=f_*{\Delta_X}$, one has $K_Y+\Delta_Y=f_*(K_X+\Delta_X) \sim_{\mathbb{Q}} 0$. 
Then $(K_X+\Delta_X)-f^*(K_Y+\Delta_Y)$ is an $f$-exceptional divisor which is $\mathbb{Q}$-linearly trivial relative to $f$. Hence, $K_X+\Delta_X=f^*(K_Y+\Delta_Y)$. 
Since $(X, \Delta_X)$ is log canonical, $(Y, \Delta_Y)$ is also log canonical, which implies that $(Y, \Delta_Y)$ is log Calabi-Yau. 
\end{proof}

\subsection{Globally $F$-regular and $F$-split varieties}

In this subsection, we review the definitions and basic properties of {\em  global $F$-regularity} and {\em global $F$-splitting}. 

A field $k$ of prime characteristic $p$ is called \textit{$F$-finite} if $[k:k^p]<\infty$. 

\begin{defi}[\textup{\cite[Definition 3.1]{schsmith-logfano}}]\label{defi-gFreg} 
Let $X$ be a normal projective variety defined over an $F$-finite field of characteristic $p>0$ and $\Delta$ be an effective $\Q$-divisor on $X$. 
\begin{enumerate}[(i)]
\item 
We say that $(X, \Delta)$ is \textit{globally sharply $F$-split} if there exists $e \in \mathbb{N}$ for which the composition map 
$$\mathcal{O}_X \to F^e_*\mathcal{O}_X \hookrightarrow F^e_*\mathcal{O}_X(\lceil (p^e-1) \Delta \rceil)$$
of the $e$-times iterated Frobenius map $\mathcal{O}_X \to F^e_*\mathcal{O}_X$ with a natural inclusion $F^e_*\mathcal{O}_X \hookrightarrow F^e_*\mathcal{O}_X(\lceil (p^e-1) \Delta \rceil)$
splits as an $\mathcal{O}_X$-module homomorphism. 

\item
We say that $(X, \Delta)$ is \textit{globally $F$-regular} if for every effective divisor $D$ on $X$, there exists $e \in \mathbb{N}$ such that the composition map 
$$\mathcal{O}_X \to F^e_*\mathcal{O}_X \hookrightarrow F^e_*\mathcal{O}_X(\lceil (p^e-1) \Delta \rceil+D)$$
of the $e$-times iterated Frobenius map $\mathcal{O}_X \to F^e_*\mathcal{O}_X$ with a natural inclusion $F^e_*\mathcal{O}_X \hookrightarrow F^e_*\mathcal{O}_X(\lceil (p^e-1) \Delta \rceil+D)$
splits as an $\mathcal{O}_X$-module homomorphism. 
\end{enumerate}
When $\Delta=0$, we simply say that $X$ is globally $F$-split (resp. globally $F$-regular). 
\end{defi}

\begin{lem}\label{F-split lemma} 
Let $X$ be a normal projective variety over an $F$-finite field of characteristic $p>0$.  
\begin{enumerate}[$(1)$]
\item $(${\cite[Lemma 2.14]{gost}}$)$ Let $f: X \dashrightarrow Y$ be a small birational map or an algebraic fiber space to a normal projective variety $Y$. 
If $X$ is globally $F$-regular (resp. globally $F$-split), then so is $Y$.

\item $(${\cite[Theorem 4.3]{schsmith-logfano}}$)$  
If $X$ is globally $F$-regular $($resp. globally $F$-split$)$, then there exists an effective $\Q$-divisor $\Delta$ on $X$ such that $(X, \Delta)$ is globally $F$-regular (resp. globally $F$-split) with $-(K_X+\Delta)$ ample (resp. $\Q$-trivial). 
\end{enumerate}
\end{lem}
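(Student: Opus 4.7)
For part (1), my plan is to push the defining splittings forward along $f$. In the algebraic fiber space case, where $f_*\mathcal{O}_X = \mathcal{O}_Y$, I would start with an effective Cartier divisor $E$ on $Y$ and use global $F$-regularity of $X$ to obtain a splitting $\mathcal{O}_X \to F^e_*\mathcal{O}_X(f^*E)$. Applying $f_*$ and invoking the projection formula together with $f_*\mathcal{O}_X = \mathcal{O}_Y$ yields the desired splitting $\mathcal{O}_Y \to F^e_*\mathcal{O}_Y(E)$; the $F$-split statement is the special case $E=0$. In the small birational case, I would use that $X$ and $Y$ agree outside a common subset of codimension $\geq 2$: a splitting on $X$ restricts to this open set and extends uniquely to $Y$ by normality, giving a splitting of the Frobenius on $Y$ (twisted by the strict transform of any prescribed effective divisor, in the globally $F$-regular case).

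For part (2), the key input I would exploit is the Grothendieck duality isomorphism for the finite flat Frobenius morphism,
\[
\mathcal{H}om_{\mathcal{O}_X}(F^e_*\mathcal{O}_X(E),\mathcal{O}_X) \;\cong\; F^e_*\mathcal{O}_X((1-p^e)K_X - E).
\]
Given a splitting witnessing global $F$-splitting (take $E = 0$) or global $F$-regularity (take $E = A$ for an auxiliary ample integral divisor $A$), its dual provides a nonzero global section of the right-hand side, hence an effective divisor $D_e$ with $D_e \sim (1-p^e)K_X - E$. I would then set $\Delta := \tfrac{1}{p^e-1}D_e$, which gives
\[
K_X + \Delta \;\sim_{\Q}\; -\tfrac{1}{p^e-1}E.
\]
Thus $-(K_X+\Delta)$ is $\Q$-trivial in the $F$-split case and ample in the $F$-regular case. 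Unwinding the trace duality further shows that the original splitting continues to exhibit $(X,\Delta)$ as globally sharply $F$-split with this boundary.

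The step I expect to be the main obstacle is the globally $F$-regular half of (2): promoting the sharp $F$-splitting of $(X,\Delta)$ constructed above to full global $F$-regularity of the pair. My plan here is to exploit the ampleness of $-(K_X+\Delta)$ by a Schwede--Smith style absorption argument: any additional effective divisor $D$ on $X$ is dominated, up to $\Q$-linear equivalence, by a multiple of $-(K_X+\Delta)$, so a small positive rational multiple of $D$ can be added into $\Delta$ and then accommodated by reapplying the trace-duality calculation with a higher Frobenius power. Keeping the perturbed boundary simultaneously effective, log Fano, and compatible with the splitting is the delicate part, and this is where I would either invoke or reproduce the argument of \cite[Theorem 4.3]{schsmith-logfano}.
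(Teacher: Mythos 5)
Your proposal is correct: the paper gives no proof of this lemma, merely citing \cite[Lemma 2.14]{gost} for (1) and \cite[Theorem 4.3]{schsmith-logfano} for (2), and your arguments (pushing the splitting forward via $f_*\mathcal{O}_X=\mathcal{O}_Y$ and the projection formula, extending across codimension two by reflexivity, and applying Grothendieck duality for Frobenius to produce $\Delta=\tfrac{1}{p^e-1}D_e$) are exactly the arguments of those cited sources. The one point you rightly flag as delicate, upgrading sharp $F$-splitting of $(X,\Delta)$ to global $F$-regularity of the pair, is indeed the substance of \cite[Theorem 4.3]{schsmith-logfano} and is handled there by the perturbation you describe.
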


Now we briefly explain how to reduce things from characteristic zero to characteristic $p > 0$. 
The reader is referred to \cite[Chapter 2]{HH} and \cite[Section 3.2]{MS} for further details. 

Let $X$ be a normal variety over a field $k$ of characteristic zero and $D=\sum_i d_i D_i$ be a $\mathbb{Q}$-divisor on $X$. 
Choosing a suitable finitely generated $\mathbb{Z}$-subalgebra $A$ of $k$, 
we can construct a scheme $X_A$ of finite type over $A$ and closed subschemes $D_{i, A} \subsetneq X_A$ such that 
there exists isomorphisms 
\[\xymatrix{
X \ar[r]^{\cong \hspace*{3em}} &  X_A \times_{\Spec \, A} \Spec \, k\\
D_i \ar[r]^{\cong \hspace*{3em}} \ar@{^{(}->}[u] & D_{i, A} \times_{\Spec \, A} \Spec \, k. \ar@{^{(}->}[u]\\
}\]
Note that we can enlarge $A$ by localizing at a single nonzero element and replacing $X_A$ and $D_{i,A}$ with the corresponding open subschemes. 
Thus, applying the generic freeness \cite[(2.1.4)]{HH}, we may assume that $X_A$ and $D_{i, A}$ are flat over $\Spec \, A$.
Enlarging $A$ if necessary, we may also assume that $X_A$ is normal and $D_{i, A}$ is a prime divisor on $X_A$. 
Letting $D_A:=\sum_i d_i D_{i,A}$, we refer to $(X_A, D_A)$ as a \textit{model} of $(X, D)$ over $A$.   
Given a closed point $\mu \in \Spec \, A$, we denote by $X_{\mu}$ (resp., $D_{i, \mu}$) the fiber of $X_A$ (resp., $D_{i, A}$) over $\mu$.  
Then $X_{\mu}$ is a scheme of finite type over the residue field $k(\mu)$ of $\mu$, which is a finite field.  
Enlarging $A$ if necessary, we may assume that  $X_{\mu}$ is a normal variety over $k(\mu)$, $D_{i, \mu}$ is a prime divisor on $X_{\mu}$ and consequently $D_{\mu}:=\sum_i d_i D_{i, \mu}$ is a $\mathbb{Q}$-divisor on $X_{\mu}$ for all closed points $\mu \in \Spec \, A$. 

Given a morphism $f:X \to Y$ of varieties over $k$ and a model $(X_A, Y_A)$ of $(X, Y)$ over $A$,  after possibly enlarging $A$, we may assume that $f$ is induced by a morphism $f_A :X_A \to Y_A$ of schemes of finite type over $A$. 
Given a closed point $\mu \in \Spec \, A$, we obtain a corresponding morphism $f_{\mu}:X_{\mu} \to Y_{\mu}$ of schemes of finite type over $k(\mu)$. 
If $f$ is projective (resp. finite), after possibly enlarging $A$, we may assume that $f_{\mu}$ is projective (resp. finite) for all closed points $\mu \in \Spec \, A$. 

We denote by $X_{\bar{\mu}}$ the base change of $X_{\mu}$ to the algebraic closure $\overline{k(\mu)}$ of $k(\mu)$. 
Similarly for $D_{\bar{\mu}}$ and $f_{\bar{\mu}}:X_{\bar{\mu}} \to Y_{\bar{\mu}}$.
Note that $(X_{\mu}, D_{\mu})$ is globally $F$-regular (resp. globally sharply $F$-split)  if and only if so is $(X_{\bar{\mu}}, D_{\bar{\mu}})$. 

\begin{defi}\label{type}
Let the notation be as above. Suppose that $X$ is a normal projective variety over a field of characteristic zero and $\Delta$ is an effective $\Q$-divisor on $X$. 
\begin{enumerate}[(i)]
\item $(X, \Delta)$ is said to be of \textit{dense globally sharply $F$-split type} if for a model of $(X, \Delta)$ over a finitely generated $\mathbb{Z}$-subalgebra $A$ of $k$, there exists a dense subset of closed points $W \subseteq \Spec \, A$ such that $(X_{\mu}, \Delta_{\mu})$ is globally sharply $F$-split for all $\mu \in W$. 
\item $(X, \Delta)$ is said to be of \textit{globally $F$-regular type} if for a model of $(X, \Delta)$ over a finitely generated $\mathbb{Z}$-subalgebra $A$ of $k$, there exists a dense open subset of closed points $W \subseteq \Spec \, A$ such that $(X_{\mu}, \Delta_{\mu})$ is globally $F$-regular for all $\mu \in W$. 
\end{enumerate}
This definition is independent of the choice of a model. 
When $\Delta=0$, we simply say that $X$ is of dense globally $F$-split type (resp. globally $F$-regular type). 
\end{defi}

\begin{prop}\label{just singularities}
Let $X$ be a normal projective variety over a field of characteristic zero and $\Delta$ be an effective $\Q$-divisor on $X$ such that $K_X+\Delta$ is $\Q$-Cartier. 
\begin{enumerate}[$(1)$]
\item $($\cite[Theorem 3.9]{hw}$)$ If $(X, \Delta)$ is of globally $F$-regular type $($resp. dense globally $F$-split type$)$, then it is klt $($resp. log canonical$)$. 
\item $($\cite[Theorem 5.1]{schsmith-logfano}$)$ If $X$ is of Fano type, then $X$ is of globally $F$-regular type.  
\end{enumerate}
\end{prop}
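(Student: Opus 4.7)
The plan is to recognize that both parts of this proposition are essentially reductions of established theorems from the F-singularity literature, and to sketch those reductions.

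For part (1), I would start by observing that global F-regularity (resp. global sharp F-splitting) of $(X_\mu, \Delta_\mu)$ is in fact stronger than the corresponding local property at every closed point: a splitting of the Frobenius on $X_\mu$ twisted by the boundary localizes to a splitting on every affine chart, so every stalk $(\mathcal{O}_{X_\mu, x}, \Delta_{\mu, x})$ is strongly F-regular (resp. sharply F-pure). Consequently $(X, \Delta)$ is of strongly F-regular type (resp. sharply F-pure type) in the sense of Hara-Watanabe, and their main theorem in \cite{hw} identifies these classes with klt pairs (resp. log canonical pairs). If I had to reprove the Hara-Watanabe correspondence itself, the key input would be the comparison of the test ideal (resp. non-F-pure ideal) of the reduction with the multiplier ideal (resp. non-lc ideal) coming from the Grothendieck trace of Frobenius applied to a log resolution.

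For part (2), using that $-(K_X+\Delta)$ is ample, the plan is to reduce modulo $p$ and verify global F-regularity of $(X_\mu, \Delta_\mu)$ for $\mu$ in a dense open subset of $\Spec\, A$. Given any effective divisor $D$ on $X_\mu$, I must split the map $\mathcal{O}_{X_\mu} \hookrightarrow F^e_*\mathcal{O}_{X_\mu}(\lceil (p^e-1)\Delta_\mu \rceil + D)$ for some $e$. By Grothendieck duality, such splittings correspond to certain global sections of a line bundle whose first Chern class grows like $(p^e-1)\bigl(-(K_{X_\mu}+\Delta_\mu)\bigr) - D$, which becomes ample once $e$ is sufficiently large. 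Serre vanishing then produces the needed section, and the klt hypothesis guarantees that this section defines a genuine splitting rather than merely a nonzero map; this is the content of the Schwede-Smith argument in \cite{schsmith-logfano}. A standard spreading-out argument then upgrades the existence from a single $\mu$ to a dense open subset of $\Spec\, A$.

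The main obstacle in reproving either part from scratch would be the translation between Frobenius-theoretic data (splittings, test ideals) and birational data (discrepancies, multiplier ideals). Since Hara-Watanabe and Schwede-Smith have already supplied both translations, the proposition amounts from our viewpoint to bookkeeping: invoking the first reference for part (1) and the second for part (2).
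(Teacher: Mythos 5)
Your proposal is correct and matches the paper, which gives no proof of this proposition at all: it is stated purely as a citation of \cite[Theorem 3.9]{hw} for (1) and \cite[Theorem 5.1]{schsmith-logfano} for (2). Your added sketches of the reductions (global splitting restricts to local strong $F$-regularity/sharp $F$-purity on affine charts, and the Grothendieck duality plus Serre vanishing argument for the log Fano case) are the standard ones underlying those references, so there is nothing to object to.
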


%%%%%%%%%%%%%%%%%%%%%%%%%%%%%%%%%%%%%%%%%%%%%%%%%%%%%%%%%%%%%%%%%%%%%%%%%%%%%%%%%%%%%%%%%%%%%%%%%%%%%%%%%%%%%%%%%%%%%%%%%%%%%

\section{Frobenius splitting of projective bundles}

\begin{prop}\label{trivial vec}
Suppose that the pair $(X,\Delta)$ is globally $F$-regular (resp. globally sharply $F$-split). 
Let $\mathcal{L}_i$ be a line bundle on $X$ for  $i=1, \dots, r$, $V=\bigoplus_{i=1}^{r}\mathcal{L}_i$ and $\pi:\mathbb{P}_X(V) \to X$ be the projective bundle associated to $V$. 
 Then $(\mathbb{P}_X(V), \pi^{[*]}\Delta)$ is also globally $F$-regular (resp. globally sharply $F$-split), where  $\pi^{[*]}$ denotes the pullback by the flat morphism $\pi$.
 \end{prop}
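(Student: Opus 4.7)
The strategy is to prove the proposition by Zariski-local trivialization of $V$ combined with a gluing argument that exploits the fact that, because $V=\bigoplus\mathcal{L}_i$ is a sum of line bundles, the transition cocycle of $Y:=\mathbb{P}_X(V)$ over $X$ takes values in the maximal torus $T$ of $\mathrm{PGL}_r$, which preserves the canonical torus-equivariant Frobenius splitting of the toric variety $\mathbb{P}^{r-1}$.

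I would first treat the trivial case $\mathcal{L}_i=\mathcal{O}_X$ for all $i$, so that $Y=X\times\mathbb{P}^{r-1}$ and $\pi^{[*]}\Delta$ is the pullback of $\Delta$ from the first factor. Since the absolute Frobenius of a product factors as the product of the absolute Frobenii,
\[
F^e_*\mathcal{O}_Y\bigl(\lceil(p^e-1)\pi^{[*]}\Delta\rceil\bigr)\;\cong\;F^e_{X,*}\mathcal{O}_X\bigl(\lceil(p^e-1)\Delta\rceil\bigr)\boxtimes F^e_{\mathbb{P}^{r-1},*}\mathcal{O}_{\mathbb{P}^{r-1}},
\]
and the external tensor product of the given splitting on $(X,\Delta)$ with a torus-invariant Frobenius splitting of $\mathbb{P}^{r-1}$ (which exists because $\mathbb{P}^{r-1}$ is globally $F$-regular and toric) yields the required splitting. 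In the globally $F$-regular case, any auxiliary effective divisor $D'$ on $Y$ can be dominated, by $\pi$-ampleness of $\mathcal{O}_Y(1)$, by $mH+\pi^*B$ with $H$ a torus-invariant hyperplane in a fibre, $m\ge 0$, and $B$ an effective divisor on $X$; one then splits fibrewise against $mH$ using global $F$-regularity of $\mathbb{P}^{r-1}$ and basewise against $B$ using global $F$-regularity of $(X,\Delta)$, combining the two splittings as above.

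For general $V=\bigoplus\mathcal{L}_i$, I would pick a Zariski open cover $\{U_\alpha\}$ of $X$ on which each $\mathcal{L}_i$ trivialises, apply the trivial-case construction on each $Y|_{U_\alpha}\simeq U_\alpha\times\mathbb{P}^{r-1}$, and glue. The transitions between patches of $Y$ are fibrewise given by the diagonal matrices $\mathrm{diag}(g_1^{\alpha\beta},\dots,g_r^{\alpha\beta})\in T\subset\mathrm{PGL}_r$, where $(g_i^{\alpha\beta})$ is the cocycle of $\mathcal{L}_i$, so the $T$-invariance of the chosen fibre splitting ensures that the local splittings agree on overlaps and define a global splitting of $Y$.

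The main obstacle is producing the torus-equivariant splitting of $\mathbb{P}^{r-1}$, especially with respect to the fibre divisor $mH$ in the globally $F$-regular case. I would handle this by starting from any splitting and taking its weight-zero component under the $T$-action on $\mathrm{Hom}(F^e_*\mathcal{O}_{\mathbb{P}^{r-1}}(mH),\mathcal{O}_{\mathbb{P}^{r-1}})$, which remains a splitting because the canonical inclusion $\mathcal{O}_{\mathbb{P}^{r-1}}\hookrightarrow F^e_*\mathcal{O}_{\mathbb{P}^{r-1}}(mH)$ lies in the weight-zero summand. The hypothesis that $V$ is a sum of line bundles is essential exactly here: for a generic rank-$r$ bundle the transition functions would range over all of $\mathrm{PGL}_r$, against which no nontrivial splitting of $\mathbb{P}^{r-1}$ can be invariant.
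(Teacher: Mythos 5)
Your argument is correct in substance, but it takes a genuinely different route from the paper's. You localize: trivialize $V$ on a Zariski cover, split each patch $U_\alpha\times\mathbb{P}^{r-1}$ by an external tensor product of the given splitting of $(X,\Delta)$ with a torus-equivariant splitting of $\mathbb{P}^{r-1}$, and glue using the fact that the transition cocycle is diagonal. The paper never trivializes or glues: it works globally with the relative symmetric algebra $\bigoplus_{m\ge 0}S^mV=\bigoplus_{m_1,\dots,m_r}\mathcal{L}_1^{m_1}\otimes\cdots\otimes\mathcal{L}_r^{m_r}$, defines the candidate splitting in one stroke by sending the multidegree-$(m_1,\dots,m_r)$ piece of $F^e_*$ to $\mathcal{L}_1^{m_1/p^e}\otimes\cdots\otimes\mathcal{L}_r^{m_r/p^e}$ via the twisted splitting of $X$ when all $m_i$ are divisible by $p^e$ and to zero otherwise, and then descends this graded splitting to $\mathbb{P}_X(V)$. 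The multigraded decomposition is of course exactly your torus weight decomposition, and the ``divisible by $p^e$ or else kill it'' rule is your weight-zero projection, so the two proofs are morally the same; what the paper's version buys is that the gluing verification disappears, and the handling of arbitrary auxiliary divisors is also avoided, since by \cite[Theorem 3.9]{schsmith-logfano} it suffices to split along the single divisor $\pi^*D$ for one ample $D\ge\Delta$ trivializing all the $\mathcal{L}_i$ off its support. What your version buys is conceptual transparency about why the splitting hypothesis on $V$ is needed (reduction of structure group to the torus). One point in your write-up deserves an extra line: the transition functions $g_i^{\alpha\beta}$ are units on $U_{\alpha\beta}$, not constants, so invariance of the fibre splitting under $T(k)$ is not literally what makes the patches agree; rather, the weight-zero splitting kills every monomial $F^e_*x^a$ with $p^e\nmid a$, and on the surviving monomials the cocycle acts by $(g^{\alpha\beta})^a=((g^{\alpha\beta})^{a/p^e})^{p^e}$, which passes through $F^e_*$ as multiplication by $(g^{\alpha\beta})^{a/p^e}$ and is then absorbed by the $\mathcal{O}_X$-linearity of $\phi_X$ --- this is precisely the case division in the paper's construction, and it is the computation your phrase ``$T$-invariance ensures the local splittings agree'' is compressing.
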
 

\begin{proof}
We only prove the globally $F$-regular case. The proof of the globally sharply $F$-split case is similar (and simpler).  

Take an effective ample divisor $D$ on $X$ such that  $D \geq \Delta$ and 
 $$\mathbb{P}_X(V) \setminus \pi^*D \cong (X \setminus D) \times \mathbb{P}^{r-1}.$$
In order to prove that $(\mathbb{P}_X(V), \pi^{[*]}\Delta)$ is globally $F$-regular, by \cite[Theorem 3.9]{schsmith-logfano}, it suffices to show that there exists $e \in \mathbb{N}$ such that the natural map 
$$\mathcal{O}_{\mathbb{P}_X(V)} \to F^e_*\mathcal{O}_{\mathbb{P}_X(V)}(\lceil (p^e-1)\pi^{[*]}\Delta \rceil+ \pi^*D)$$ 
splits. 
Since $(X, \Delta)$ is globally $F$-regular, there exist $e \in \mathbb{N}$ and an $\mathcal{O}_X$-module homomorphism 
 $$\phi:F^e_*\mathcal{O}_X(\lceil (p^e-1)\Delta \rceil +D) \to \mathcal{O}_X$$
 that sends $1$ to $1$. 
 For each $m_1, \dots, m_r \ge \mathbb{N}$,  if they are all divisible by $p^e$, then by tensoring $\phi$ with $L_1^{m_1/p^e} \otimes \cdots \otimes L_r^{m_r/p^e}$,  we have a map 
 \begin{align*}
 \phi_{m_1, \dots, m_r}: F^e_*(L_1^{m_1} \otimes \cdots \otimes L_r^{m_r}(\lceil (p^e-1)\Delta \rceil+D)) & \to  L_1^{m_1/p^e} \otimes \cdots \otimes L_r^{m_r/p^e} \\
& \hookrightarrow  \bigoplus_{m \ge 0}S^m V.
 \end{align*}
Otherwise, we define the map 
$$\phi_{m_1, \dots, m_r}: F^e_*(L_1^{m_1} \otimes \cdots \otimes L_r^{m_r}(\lceil (p^e-1)\Delta \rceil+D)) \to \bigoplus_{m \ge 0}S^m V$$ to be zero. 
Putting the $\phi_{m_1, \dots, m_r}$ together, we obtain a map 
$$F^e_*\bigoplus_{m \ge 0}S^m V(\lceil (p^e-1)\Delta \rceil+D) \to \bigoplus_{m \ge 0}S^m V$$
which sends $1$ to $1$. 
This induces a map 
$$F^e_*\mathcal{O}_{\mathbb{P}_X(V)}(\pi^{[*]} \lceil (p^e-1)\Delta \rceil+\pi^*D) \to \mathcal{O}_{\mathbb{P}_X(V)}$$
which sends $1$ to $1$.  
Note that $\pi^{[*]}\lceil (p^e-1)\Delta\rceil= \lceil (p^e-1)\pi^{[*]}  \Delta \rceil$, because $X$ is normal  and $\pi$ is smooth in codimension one.
Therefore, $\mathbb{P}_X(V)$ is globally $F$-regular.
\end{proof}

If the vector bundle over a globally $F$-split variety does not split, then its projective bundle is not  globally $F$-split in general. 

\begin{eg}[{See \cite[Remark 1]{MS}}]\label{elliptic ruled}
Let $E$ be an elliptic curve over $\overline{\mathbb{F}}_p$ and $V$ be an indecomposable rank $2$ vector bundle with trivial determinant over $E$. 
Then $\mathbb{P}_Z(E)$ is not globally $F$-split but of log Calabi-Yau type  at lest when $p \geq 5$.  
\end{eg}

We use the following lemma in the proof of the main results. 
This is a generalization of \cite[Proposition 1.4]{hawayo-rees} to the log setting, and the proof is essentially the same as that of \textit{loc. cit}. 
\begin{lem}\label{lem_crepant}
Let $f:X \to Y$ be a birational morphism of normal projective varieties over an $F$-finite field of characteristic $p>0$. 
Suppose that there exist an effective $\Q$-divisor $\Delta_X$ on $X$ and an effective $\Q$-divisor $\Delta_Y$ on $Y$ such that $K_X+\Delta_X$ and $K_Y+\Delta_Y$ are $\Q$-Cartier and that $f^*(K_Y+\Delta_Y)=K_X+\Delta_X$. 
Then $(X,\Delta_X)$ is globally $F$-regular (resp. globally sharply $F$-split) if and only if so is $(Y,\Delta_Y)$.
\end{lem}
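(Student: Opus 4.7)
The plan is to prove both implications by transferring Frobenius splittings between $X$ and $Y$, using two key ingredients: $f_*\mathcal{O}_X = \mathcal{O}_Y$ (since $Y$ is normal and $f$ is birational) and the crepant identity $f^*(K_Y + \Delta_Y) = K_X + \Delta_X$, which relates the relevant $\Q$-Cartier divisor classes up to $f$-exceptional data. This mimics the strategy of \cite[Proposition 1.4]{hawayo-rees}, adapted to pairs.

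For the implication $(X, \Delta_X) \Rightarrow (Y, \Delta_Y)$, I would start from a splitting $\phi : F^e_*\mathcal{O}_X(\lceil(p^e-1)\Delta_X\rceil) \to \mathcal{O}_X$ sending $1$ to $1$, and push it forward by $f$. Using $f_*\mathcal{O}_X = \mathcal{O}_Y$ and the commutation of $f_*$ with $F^e_*$, this gives a map $F^e_*\bigl(f_*\mathcal{O}_X(\lceil(p^e-1)\Delta_X\rceil)\bigr) \to \mathcal{O}_Y$. The central calculation is to exhibit an inclusion of reflexive sheaves
\[
\mathcal{O}_Y(\lceil(p^e-1)\Delta_Y\rceil) \hookrightarrow f_*\mathcal{O}_X(\lceil(p^e-1)\Delta_X\rceil),
\]
which, by reflexivity, may be checked in codimension one. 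For non-exceptional prime divisors this is immediate (since $f$ is an isomorphism there), and for $f$-exceptional divisors $E$ the crepant condition provides exactly the required inequality between the coefficient of $E$ in $\Delta_X$ and the pull-back contribution from $\Delta_Y$. Composing this inclusion with $F^e_* f_*\phi$ yields a splitting for $(Y, \Delta_Y)$; the globally $F$-regular case is handled analogously by additionally transporting an auxiliary effective divisor from $Y$ to $X$ via $f^*$.

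For the converse direction, I would invoke Grothendieck duality for the Frobenius: a splitting $\psi: F^e_*\mathcal{O}_Y(\lceil(p^e-1)\Delta_Y\rceil) \to \mathcal{O}_Y$ corresponds to a global section of the reflexive sheaf $\mathcal{O}_Y\bigl((1-p^e)K_Y - \lceil(p^e-1)\Delta_Y\rceil\bigr)$ whose germ at the generic point of $Y$ is a unit. Since the crepant identity, multiplied by $(1-p^e)$, yields
\[
f^*\bigl((1-p^e)(K_Y+\Delta_Y)\bigr) = (1-p^e)(K_X+\Delta_X),
\]
this section pulls back to a section of the analogous sheaf on $X$, producing the corresponding splitting of $(X, \Delta_X)$. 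The fact that $f$ is an isomorphism outside a codimension-two locus of $Y$ ensures the unit condition at the generic point is preserved.

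I expect the main technical obstacle to be the bookkeeping of round-up divisors under pull-back: while the $\Q$-divisors $(1-p^e)(K_\bullet + \Delta_\bullet)$ are identified by the crepant condition, the integer round-ups $\lceil(p^e-1)\Delta_\bullet\rceil$ are not related by a naive pull-back formula, because $K$ and $\Delta$ individually need not transform compatibly with $f$. Resolving this requires a divisor-by-divisor analysis, using the discrepancy identity $a(E, Y, \Delta_Y) = -\operatorname{coeff}_E(\Delta_X)$ for each $f$-exceptional $E$, together with the standard inequality $\lceil(p^e-1)\alpha\rceil \le (p^e-1)\lceil\alpha\rceil$ to control the fractional parts.
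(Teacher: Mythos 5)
Your second direction ($Y\Rightarrow X$, via Grothendieck duality and the pull-back of a section of $\mathcal{O}_Y(\lfloor(1-p^e)(K_Y+\Delta_Y)\rfloor)$ along the $\Q$-Cartier identity $f^*\bigl((1-p^e)(K_Y+\Delta_Y)\bigr)=(1-p^e)(K_X+\Delta_X)$) is correct and is exactly the paper's argument. The gap is in the first direction: the ``central calculation'' $\mathcal{O}_Y(\lceil(p^e-1)\Delta_Y\rceil)\hookrightarrow f_*\mathcal{O}_X(\lceil(p^e-1)\Delta_X\rceil)$ is false in general. Take $Y$ a smooth surface, $C$ a smooth curve through a point $y_0$, $\Delta_Y=C$, and $f$ the blow-up at $y_0$ with exceptional curve $E$; then $\Delta_X=f^{-1}_*C$ and the crepant hypothesis holds. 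If $c$ is a local equation of $C$ at $y_0$, then $g=c^{-(p^e-1)}$ is a local section of $\mathcal{O}_Y((p^e-1)C)$, but $\mathrm{div}_X(g)+(p^e-1)f^{-1}_*C=-(p^e-1)E$ is not effective, so $g$ does not lie in $f_*\mathcal{O}_X((p^e-1)\Delta_X)$. Two things go wrong with your justification: $f_*\mathcal{O}_X(\lceil(p^e-1)\Delta_X\rceil)$ is \emph{not} reflexive (push-forward along a morphism contracting divisors destroys the extension property across codimension two, as this example shows), so an inclusion into it cannot be tested in codimension one; and the inequality you would need along an exceptional $E$ is off by $(p^e-1)\,\mathrm{ord}_E(K_X-f^*K_Y)$, because the crepant condition pins down $\mathrm{coeff}_E(\Delta_X)$ only after the canonical divisors are taken into account. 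This is not the rounding issue you flag at the end; it goes the wrong way whenever $E$ has positive log discrepancy, i.e.\ almost always.

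The fix is to treat $K+\Delta$ as a single $\Q$-Cartier object in this direction as well, which is what the paper does symmetrically in both directions: by duality a splitting for $(X,\Delta_X)$ \emph{is} a section of $\mathcal{O}_X(\lfloor f^*((1-p^e)(K_Y+\Delta_Y))\rfloor)$ that is a unit at the generic point, and such sections push forward to sections of $\mathcal{O}_Y(\lfloor(1-p^e)(K_Y+\Delta_Y)\rfloor)$, hence to splittings for $(Y,\Delta_Y)$. (Alternatively, your $f_*\phi$ can be saved without any inclusion: the sheaves $f_*\mathcal{O}_X(\lceil(p^e-1)\Delta_X\rceil)$ and $\mathcal{O}_Y(\lceil(p^e-1)\Delta_Y\rceil)$ agree outside a closed subset of $Y$ of codimension $\ge 2$, and $\mathcal{H}om_{\mathcal{O}_Y}(F^e_*(-),\mathcal{O}_Y)$ is reflexive, so $f_*\phi$ extends across that locus.) One further point for the globally $F$-regular case: in the direction $Y\Rightarrow X$ the auxiliary effective divisor lives on $X$ and may be $f$-exceptional, with zero push-forward; the paper handles this by fixing ample divisors with $f^*H_Y\ge H_X$ and testing $F$-regularity only against sections of multiples of $H_X$, and some such device is needed in your write-up too.
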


\begin{proof}
We only prove the globally $F$-regular case. The proof of the globally sharply $F$-split case is similar (and simpler).  

Take an ample Cartier divisor $H_Y$ on $Y$ and an ample divisor $H_X$ on $X$ such that $f^*H_Y \geq H_X$. 
In order to prove that $(X,\Delta_X)$ is globally $F$-regular, it suffices to show that for any integer $n \ge 0$ and nonzero element $c \in H^0(X, nH_X)$, there exists $e \in \mathbb{N}$ such that 
the map 
$$\mathcal{O}_X \to F^e_*\mathcal{O}_X(\lceil (p^e-1) \Delta_X \rceil) \xrightarrow{ \times F^e_*c} F^e_*\mathcal{O}_X(\lceil (p^e-1) \Delta_X \rceil+nH_X)$$
splits. 

Fix  any $n \ge 0$ and nonzero element $c \in H^0(X, nH_X)$. 
Since $f^*H_Y \geq H_X$, one has that $f_*c \in H^0(Y, nH_Y)$.
It then follows from the global $F$-regularity of $(Y,\Delta_Y)$ that there exist $e \in \mathbb{N}$ and an $\mathcal{O}_Y$-module homomorphism 
$$\phi: F^e_*\mathcal{O}_Y(\lceil (q-1)\Delta_Y \rceil +nH_Y) \to \mathcal{O}_Y$$
which sends $F^e_*f_*c$ to $1$. 
 By Grothendieck duality and the fact that $f^*H_Y \geq H_X$,  we obtain the following composition map: 
\begin{align*}
& \mathrm{Hom}_{\mathcal{O}_Y}(F^e_*\mathcal{O}_Y(\lceil (q-1)\Delta_Y \rceil +nH_Y), \mathcal{O}_{Y})\\
 \cong & H^0(Y, -\lceil (q-1)(K_Y+\Delta_Y) \rceil -nH_Y)\\
\subseteq & H^0(X, -\lceil (q-1)(K_X+\Delta_X) \rceil -nH_X)\\
\cong & \mathrm{Hom}_{\mathcal{O}_X}(F^e_*\mathcal{O}_X(\lceil (q-1)\Delta_X \rceil +nH_X), \mathcal{O}_{X}). 
\end{align*} 
Hence, $\phi$ induces a map 
$$f^*\phi: F^e_*\mathcal{O}_X(\lceil (q-1)\Delta_X \rceil +nH_X) \to \mathcal{O}_{X}$$ 
which sends $F^e_*c$ to $1$. 
We have seen above that this implies the global $F$-regularity of $(X, \Delta_X)$. 

The converse argument just reverses this. The lemma is proved.
\end{proof}

\begin{rem}[\textup{\cite{MS}, \cite[Lemma 2.1]{hawayo-rees}}]\label{min resolution}
Since globally $F$-split surfaces are $\mathbb{Q}$-Gorenstein by \cite{MS}, it follows from \cite[Proposition 1.4]{hawayo-rees} (or Lemma \ref{lem_crepant}) that the minimal resolution of a globally $F$-regular (resp. globally $F$-split) surface is globally $F$-regular (resp. globally $F$-split).  
\end{rem}

%%%%%%%%%%%%%%%%%%%%%%%%%%%%%%%%%%%%%%%%%%%%%%%%%%%%%%%%%%%%%%%%%%%%%%%%%%%%%%%%%%%%%%%%%%%%%%%%%%%%%%%%

\section{Zariski decomposition and reduction modulo $p$} 
 
In this section, we study how Zariski decompositions behave under taking reduction modulo $p$.  

\begin{lem}\label{pseudo-effective-d}
Let $X$ be a normal projective variety over an algebraically closed field $k$ of characteristic zero and $D$ be an $\mathbb{R}$-Cartier divisor on $X$. 
Suppose we are  given a model of $(X, D)$ over a finitely generated $\Z$-subalgebra $A$ of $k$. 
If $D$ is not nef (resp. pseudo-effective), then $D_{\mu}$ is not nef (resp. pseudo-effective) for general closed points $\mu \in \Spec \, A$. 
\end{lem}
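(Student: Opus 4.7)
The plan is to treat the two assertions separately by spreading out a suitable test object against which $D$ has strictly negative intersection, and then exploiting the constancy of intersection numbers in flat proper families to transfer the negativity to the closed fibers.

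For the nef case, suppose $D$ is not nef on $X$, so there exists an irreducible curve $C \subset X$ with $D \cdot C < 0$. After possibly enlarging $A$, I would spread $C$ to a closed subscheme $\mathcal{C} \subset X_{A}$ that is flat over $\Spec\,A$ and whose geometric generic fiber recovers $C$. Because intersection numbers of $\mathbb{R}$-Cartier divisors with proper $1$-cycles are locally constant in flat families, one has $D_{\mu} \cdot C_{\mu} = D \cdot C < 0$ for every closed $\mu$ in a dense open subset of $\Spec\,A$; in particular some component of $C_{\mu}$ witnesses the failure of $D_{\mu}$ to be nef there.

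For the pseudo-effective case the natural test object is a complete intersection of ample divisors, and the nontrivial input is the duality theorem of Boucksom--Demailly--Paun--Peternell, which is at our disposal because we are in characteristic zero and resolutions of singularities exist. Choose a resolution $\pi\colon \widetilde{X} \to X$: since $D$ is $\mathbb{R}$-Cartier and $\pi$ is birational, pseudo-effectivity of $D$ on $X$ is equivalent to pseudo-effectivity of $\pi^{*}D$ on $\widetilde{X}$ (use $\pi_{*}\pi^{*}D = D$ and the preservation of effectivity under pullback and pushforward). BDPP on the smooth variety $\widetilde{X}$ then produces ample $\mathbb{Q}$-Cartier divisors $\widetilde{H}_{1}, \dots, \widetilde{H}_{n-1}$ on $\widetilde{X}$, where $n=\dim X$, satisfying $\pi^{*}D \cdot \widetilde{H}_{1} \cdots \widetilde{H}_{n-1} < 0$. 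After enlarging $A$ once more, I would extend $\pi$ to a projective birational morphism $\pi_{A}\colon \widetilde{X}_{A} \to X_{A}$ and spread the $\widetilde{H}_{i}$ to relatively ample $\mathbb{Q}$-Cartier divisors $\widetilde{H}_{i,A}$ on $\widetilde{X}_{A}$. For general closed $\mu \in \Spec\,A$ one has $(\pi^{*}D)_{\mu} = \pi_{\mu}^{*}D_{\mu}$ and each $\widetilde{H}_{i,\mu}$ is ample on $\widetilde{X}_{\mu}$, so constancy of intersection numbers gives $\pi_{\mu}^{*}D_{\mu} \cdot \widetilde{H}_{1,\mu} \cdots \widetilde{H}_{n-1,\mu} < 0$. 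If $D_{\mu}$ were pseudo-effective, then so would $\pi_{\mu}^{*}D_{\mu}$ be on $\widetilde{X}_{\mu}$, contradicting the non-negativity of the intersection of a pseudo-effective divisor with a complete intersection of ample divisors (the easy, characteristic-free direction of BDPP). Hence $D_{\mu}$ fails to be pseudo-effective.

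The main obstacle is the pseudo-effective half, which genuinely uses the non-trivial direction of BDPP -- and therefore resolution of singularities -- to produce the complete-intersection test class on $\widetilde{X}$. Every other ingredient, namely the spreading out of divisors and of the morphism $\pi$, the openness of ampleness on fibers of a projective family, and the flat invariance of intersection numbers, is a routine application of generic flatness.
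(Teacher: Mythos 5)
Your argument is correct and is essentially the paper's: both reduce to a smooth birational model, use the BDPP duality to produce a movable test class with $D$-degree $<0$, and transfer that strict inequality to general closed fibers via the constancy of intersection numbers in flat proper families together with the easy, characteristic-free direction of the duality (the paper simply phrases the test class as a curve, resp.\ a covering curve, on the resolution rather than as a complete intersection of amples). One small imprecision: BDPP furnishes the ample classes on \emph{some} modification $\mu\colon \widetilde{X}'\to\widetilde{X}$ rather than on your pre-chosen resolution $\widetilde{X}$, so you should replace $\widetilde{X}$ by $\widetilde{X}'$ (still smooth and projective birational over $X$) before spreading out --- a cost-free fix since you spread out the morphism anyway.
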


\begin{proof}
In the pseudo-effective case, we make use of the characterization of pseudo-effective divisors given in \cite{bdpp}. 
By taking a resolution, we may assume that $X$ is smooth. 
Since $D$ is not nef (resp. pseudo-effective), we can find a curve (resp. a covering curve) $C$ such that $C.D<0$. 
Then $C_{\mu}$ is also a curve (resp. a covering curve) and $C_{\mu}.D_{\mu}=C.D<0$ for general closed points $\mu \in \Spec \, A$. 
Thus, we conclude that $D_{\mu}$ is not nef (resp. pseudo-effective) for general closed points $\mu \in \Spec \, A$. 
\end{proof}

\begin{lem}[cf.{\cite{fujita}}]\label{lem_zariski_nu}
Let $S$ be a smooth projective surface over an algebraic closed field $k$ of characteristic zero and $D$ be a pseudo-effective $\mathbb{Q}$-divisor on $S$. 
Let $D=P(D)+N(D)$ be the Zariski decomposition of $D$. 
Suppose we are given a model of $(S, P(D), N(D))$ over a finitely generated $\Z$-subalgebra $A$ of $k$. 
For general closed points $\mu \in \Spec \, A$,  if $D_{\bar{\mu}}$ is pseudo-effective, then $N(D)_{\bar{\mu}} \leq N(D_{\bar{\mu}})$. 
 \end{lem}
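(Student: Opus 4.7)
The plan is to compare the two Zariski decompositions componentwise by exploiting negative definiteness of the intersection matrix on the support of $N(D)$. First I would write $N(D) = \sum_{i=1}^r a_i N_i$ in characteristic zero, with $a_i > 0$, each $N_i$ a distinct prime divisor, $P(D) \cdot N_i = 0$, and the intersection matrix $Q := (N_i \cdot N_j)_{i,j}$ negative definite. After enlarging $A$ if necessary, for general closed points $\mu$, each $N_{i,\bar\mu}$ is a prime divisor on $S_{\bar\mu}$, these are pairwise distinct, and all intersection numbers are preserved, so $Q$ remains negative definite after reduction.

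Next I would consider the Zariski decomposition $D_{\bar\mu} = P_\mu + N_\mu$ on $S_{\bar\mu}$ and write
\[
N_\mu = \sum_j b_j N_{j,\bar\mu} + E_\mu
\]
with $b_j \ge 0$ and $E_\mu \ge 0$ an effective $\mathbb Q$-divisor sharing no component with any $N_{j,\bar\mu}$. The goal reduces to showing $a_j \le b_j$ for every $j$. Intersecting with $N_{i,\bar\mu}$, using nefness of $P_\mu$ together with the identity $D \cdot N_i = N(D) \cdot N_i$ (inherited from $P(D) \cdot N_i = 0$), the short computation
\[
0 \le P_\mu \cdot N_{i,\bar\mu} = \sum_j (a_j - b_j)(N_j \cdot N_i) - E_\mu \cdot N_{i, \bar\mu},
\]
combined with $E_\mu \cdot N_{i,\bar\mu} \ge 0$, yields $Qc \ge 0$ componentwise, where $c_j := a_j - b_j$.

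The last step is a linear-algebraic observation: if $Q$ is negative definite with nonnegative off-diagonal entries and $Qc \ge 0$ componentwise, then $c \le 0$. I would prove it by splitting $c = c^+ - c^-$ into nonnegative parts with disjoint support. Since $Q_{ij} = N_i \cdot N_j \ge 0$ for $i \ne j$ (distinct prime divisors on a surface intersect nonnegatively), one gets $(c^+)^T Q c^- \ge 0$. Pairing $Qc \ge 0$ with $c^+$ then forces $(c^+)^T Q c^+ \ge (c^+)^T Q c^- \ge 0$, contradicting negative definiteness of $Q$ unless $c^+ = 0$. Hence $a_j \le b_j$ for all $j$, giving
\[
N(D)_{\bar\mu} = \sum_j a_j N_{j,\bar\mu} \le \sum_j b_j N_{j,\bar\mu} \le N_\mu = N(D_{\bar\mu}).
\]

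The main obstacle is this final linear-algebra deduction, which fundamentally uses both the negative definiteness of $Q$ and the nonnegativity of its off-diagonal entries; the rest of the argument is essentially bookkeeping, ensuring that the defining orthogonality $P(D) \cdot N_i = 0$ and the intersection matrix $Q$ survive reduction mod $p$ for general $\mu$, and converting nefness of $P_\mu$ into the key inequality driving the comparison.
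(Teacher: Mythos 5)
Your argument is correct, and it reaches the conclusion by a genuinely different route than the one in the paper. The paper works ``from the top down'': it first invokes Fujita's (1.8) Lemma to see that $P(D)_{\bar\mu}$ remains pseudo-effective (using $P(D)_{\bar\mu}\cdot C_{i,\bar\mu}=0$ and the negative definiteness of the reduced matrix), takes the Zariski decomposition $P(D)_{\bar\mu}=P+N'$, and then checks via the uniqueness theorem that $D_{\bar\mu}=P+\bigl(N(D)_{\bar\mu}+N'\bigr)$ is itself the Zariski decomposition of $D_{\bar\mu}$; the inequality then falls out as $N(D_{\bar\mu})=N(D)_{\bar\mu}+N'\geq N(D)_{\bar\mu}$, which in fact pinpoints $N(D_{\bar\mu})$ exactly. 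You instead compare the two negative parts coefficient by coefficient, reducing everything to the statement that a negative definite matrix with nonnegative off-diagonal entries and $Qc\geq 0$ forces $c\leq 0$ --- a standard negativity-lemma argument, which you execute correctly (the inputs $D_{\bar\mu}\cdot N_{i,\bar\mu}=D\cdot N_i=N(D)\cdot N_i$, the nefness of $P_\mu$, and $E_\mu\cdot N_{i,\bar\mu}\geq 0$ are all justified for general $\mu$ by preservation of intersection numbers). Your version is more self-contained, needing only the existence of the Zariski decomposition of $D_{\bar\mu}$ and the nefness of its positive part, and avoids Fujita's pseudo-effectivity criterion; the paper's version buys the extra information of an explicit description of $N(D_{\bar\mu})$, though that extra precision is not needed for the applications in Proposition \ref{nef-ness}.
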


\begin{proof}
Let $N(D)=\sum n_iC_i$ be the decomposition of $N(D)$ into prime divisors. 
We can assume that $D_{\bar{\mu}}$ is pseudo-effective, $N(D)_{\bar{\mu}}$ has a negative definite intersection matrix, and $P(D)_{\bar{\mu}}. C_{i, \bar{\mu}}=P(D). C_i=0$ for all $i$. 
If $P(D)_{\bar{\mu}}$ is nef, then $D_{\bar{\mu}}=P(D)_{\bar{\mu}}+N(D)_{\bar{\mu}}$ is the Zariski decomposition of $D_{\bar{\mu}}$. 
Then the assertion holds by the uniqueness of the Zariski decomposition (\cite[(1.12) Theorem]{fujita}).  
Thus, we may assume that $P(D)_{\bar{\mu}}$ is not nef.  

Note by \cite[(1.8) Lemma]{fujita} that $P(D)_{\bar{\mu}}$ is pseudo-effective, because  $P(D)_{\bar{\mu}}. C_{i, \bar{\mu}}=0$ for all $i$ and $N(D)_{\bar{\mu}}$ has a negative definite intersection matrix.  
It then follows from \cite[(1.12) Theorem]{fujita} that $P(D)_{\bar{\mu}}$ has the Zariski decomposition $P(D)_{\bar{\mu}}=P+N'$.
Let $N=N(D)_{\bar{\mu}}+N'$ and $N'=\sum n'_jC'_j$ be the decomposition of $N'$ into prime divisors.  
It is enough to show that $D_{\bar{\mu}}=P+N$ is the Zariski decomposition of $D_{\bar{\mu}}$. 
Since $P$ is nef and $P(D)_{\bar{\mu}}.C_{i, \bar{\mu}}=0$ for all $i$, 
we see that if $P.C_{i, \bar{\mu}} \ne 0$ for some $i$, then $N'.C_{i, \bar{\mu}}<0$, which implies that $C_{i, \bar{\mu}}$ is contained in the support of $N'$. 
This contradicts the fact that $P(D)_{\bar{\mu}}=P+N'$ is the Zariski decomposition, so $P.C_{i, \bar{\mu}}=0$ and $C_{i, \bar{\mu}}.C'_j=0$ for all $i,j$. 
Then $N$ has a negative definite intersection matrix, and we can conclude by the uniqueness of the Zariski decomposition again that $D_{\bar{\mu}}=P+N$ is the Zariski decomposition of $D_{\bar{\mu}}$. 
\end{proof}

If $S$ is a smooth projective surface of dense globally $F$-split type, then $-K_S$ is pseudo-effective by Lemma \ref{pseudo-effective-d}, so we can consider its Zariski decomposition. 
The following proposition plays a key role in this paper.
 \begin{prop}\label{nef-ness}
 Let $S$ be a smooth projective surface of dense globally $F$-split type $($resp. globally $F$-regular type$)$ over an algebraically closed field $k$ of characteristic zero. 
 Let $-K_S=P+N$ be the Zariski decomposition of $-K_S$. 
 Then $(S, N)$ is of dense globally sharply $F$-split type $($resp. globally $F$-regular type$)$, so in particular, $(S, N)$ is log canonical $($resp. klt$)$ by Proposition \ref{just singularities} (1). 
Moreover, given a model of $(S, P, N)$ over a finitely generated $\Z$-subalgebra $A$ of $k$,  
the pair $(S_{\mu}, N_{\mu})$ is globally sharply $F$-split $($resp. globally $F$-regular$)$ and 
$$-K_{S_{\bar{\mu}}}=P_{\bar{\mu}}+N_{\bar{\mu}}$$ 
is the Zariski decomposition of $-K_{S_{\bar{\mu}}}$ for a dense set of closed points $($resp. general closed points$)$ $\mu \in \Spec \, A$. 
\end{prop}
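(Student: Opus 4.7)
We focus on the dense globally $F$-split case; the globally $F$-regular case requires only minor technical modifications indicated at the end. Fix a model $(S_A, P_A, N_A)$ of $(S, P, N)$ over a finitely generated $\Z$-subalgebra $A$ of $k$, and let $W \subseteq \Spec A$ be a dense set of closed points at which $S_\mu$ is globally $F$-split. By Lemma \ref{F-split lemma}(2), for each $\mu \in W$ there exists an effective $\Q$-divisor $\Delta_\mu$ on $S_\mu$ such that $(S_\mu, \Delta_\mu)$ is globally sharply $F$-split and $K_{S_\mu} + \Delta_\mu \sim_\Q 0$; in particular $\Delta_\mu$ is effective and $\Q$-linearly equivalent to $-K_{S_\mu}$.

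The first step rests on the following minimality property of the negative part of a Zariski decomposition on a smooth projective surface: if $D$ is pseudo-effective with decomposition $D = P(D) + N(D)$, then \emph{every} effective $\Q$-divisor $D' \sim_\Q D$ satisfies $D' \geq N(D)$. Indeed, writing $D' - N(D) = E_+ - E_-$ with $E_\pm \geq 0$ of disjoint supports, we have $\Supp E_- \subseteq \Supp N(D)$ and $P(D).E_- = 0$, so $E_-^2 = E_+.E_- \geq 0$, forcing $E_- = 0$ by the negative-definiteness of the intersection form on $\Supp N(D)$ (cf.~\cite[(1.12)]{fujita}). Applied to $D = -K_{S_{\bar\mu}}$ (pseudo-effective for general $\mu$ by Lemma \ref{pseudo-effective-d}) and $D' = \Delta_\mu$, this yields $\Delta_\mu \geq N(-K_{S_{\bar\mu}})$. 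Combining with Lemma \ref{lem_zariski_nu} gives $N_{\bar\mu} \leq N(-K_{S_{\bar\mu}}) \leq \Delta_\mu$. Since global sharp $F$-splitting is monotone in the boundary, $(S_\mu, N_\mu)$ inherits global sharp $F$-splitting from $(S_\mu, \Delta_\mu)$ for every $\mu \in W$, proving that $(S, N)$ is of dense globally sharply $F$-split type; log canonicity of $(S, N)$ then follows from Proposition \ref{just singularities}(1).

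The main obstacle is to upgrade $N_{\bar\mu} \leq N(-K_{S_{\bar\mu}})$ to an equality for general $\mu$, equivalently to show that $P_{\bar\mu}$ is nef. Suppose not. Following the proof of Lemma \ref{lem_zariski_nu}, $P_{\bar\mu}$ then admits a nontrivial Zariski decomposition $P_{\bar\mu} = P' + N'$ with $N' > 0$, and $N(-K_{S_{\bar\mu}}) = N_{\bar\mu} + N'$; every component $C''$ of $N'$ satisfies $P_{\bar\mu}.C'' < 0$. Since $P$ is nef in characteristic zero, $P_{\bar\mu}.C_{\bar\mu} = P.C \geq 0$ for every prime divisor $C$ on $S$, so $C''$ must be a \emph{new} prime on $S_{\bar\mu}$---one that is not the specialization of any prime on $S$. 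After enlarging $A$, cohomology and base change applied to $\pi_{A*}\mathcal{O}_{S_A}(-mK_{S_A/A})$ makes
\[ H^0(S_A, -mK_{S_A/A}) \otimes_A k(\mu) \;\xrightarrow{\sim}\; H^0(S_\mu, -mK_{S_\mu}) \]
an isomorphism for the relevant $m$ and general $\mu$, so every section of $-mK_{S_{\bar\mu}}$ is the reduction of a section of $-mK_S$ and has multiplicity zero along every new prime. The surface identity
\[ \operatorname{ord}_{C''} N(-K_{S_{\bar\mu}}) = \lim_{\epsilon \to 0^+} \inf_{0 \neq \sigma \in H^0(S_{\bar\mu}, m(-K_{S_{\bar\mu}}+\epsilon H))} \frac{\mult_{C''}(\operatorname{div} \sigma)}{m} \]
(with $H$ ample, Nakayama) between the Zariski negative part and the asymptotic order of vanishing then forces $\operatorname{ord}_{C''} N(-K_{S_{\bar\mu}}) = 0$---take products of a char.~$0$ lift and a Bertini-generic representative of $|mH|$ avoiding $C''$---contradicting $C'' \in \Supp N'$. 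Hence $P_{\bar\mu}$ is nef and $-K_{S_{\bar\mu}} = P_{\bar\mu} + N_{\bar\mu}$ is the Zariski decomposition.

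In the globally $F$-regular case, Lemma \ref{F-split lemma}(2) instead provides $\Delta_\mu$ with $(S_\mu, \Delta_\mu)$ globally $F$-regular and $A_\mu := -(K_{S_\mu}+\Delta_\mu)$ ample. The inequality $\Delta_\mu \geq N(-K_{S_{\bar\mu}})$ is recovered from a variant of the minimality argument: for each prime divisor $C$ and each sufficiently large $m$, a Bertini-generic effective representative of $|mA_\mu|$ avoids $C$, so adding it to $m\Delta_\mu$ produces an effective representative of $|-mK_{S_\mu}|$ of multiplicity $m\cdot\mult_C(\Delta_\mu)$ along $C$; the asymptotic formula then gives $\operatorname{ord}_C N(-K_{S_{\bar\mu}}) \leq \mult_C(\Delta_\mu)$. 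Monotonicity of global $F$-regularity under shrinking the boundary delivers $(S_\mu, N_\mu)$ globally $F$-regular, while nefness of $P_{\bar\mu}$ follows as above. The crux of the proof is the rigidity provided by the $F$-split hypothesis at the level of sections of $-mK_S$: it is what prevents new curves in positive characteristic from hiding inside the negative part of the Zariski decomposition after reduction modulo $p$.
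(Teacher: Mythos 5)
Your first half is correct and is essentially the paper's argument: you prove the minimality of the negative part among effective representatives (the ``property of the Zariski decomposition'' the paper invokes) via negative-definiteness, combine it with Lemma \ref{lem_zariski_nu}, and use monotonicity of global sharp $F$-splitting in the boundary to get $(S_{\mu}, N_{\mu})$ globally sharply $F$-split for $\mu \in W$; the $F$-regular variant with the ample perturbation is also fine.

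The gap is in your proof that $P_{\bar{\mu}}$ is nef. Your construction of sections of $m(-K_{S_{\bar{\mu}}}+\epsilon H)$ with zero multiplicity along the new curve $C''$ requires a nonzero ``char.~$0$ lift'', i.e. a nonzero section of $-mK_S$ for some $m>0$; but you never establish $\kappa(-K_S)\ge 0$, and pseudo-effectivity of $-K_S$ alone does not give it. This is precisely what Lemma \ref{non-van} (Shokurov's non-vanishing for the log canonical pair $(S,N)$ with $P=-(K_S+N)$ nef, which becomes available once your first half is done) supplies, and it is the crux of the paper's proof: fixing a single effective $D\sim_{\Q} P$ in characteristic zero, one enlarges $A$ once and observes that any curve $C$ with $P_{\bar{\mu}}.C<0$ must be a component $D_{i,\bar{\mu}}$ of $D_{\bar{\mu}}$, whence $P_{\bar{\mu}}.C=P.D_i\ge 0$, a contradiction. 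Falling back on sections of the big divisors $m(-K_S+\epsilon H)$ in characteristic zero does not repair your argument either: the asymptotic multiplicity formula needs $\epsilon\to 0$, hence cohomology-and-base-change for infinitely many $m$, hence infinitely many enlargements of $A$; in the arithmetic setting this only treats ``very general'' $\mu$, which need not contain a dense set of closed points and in any case cannot be arranged after $\mu\in W$ has been fixed. (A minor further slip: not every component $C''$ of $N'$ need satisfy $P_{\bar{\mu}}.C''<0$, but since the non-nefness of $P_{\bar{\mu}}$ produces at least one curve $C$ with $P_{\bar{\mu}}.C<0$, and any such $C$ lies in $\Supp N'$, this does not affect the structure of the argument.)
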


\begin{proof}
By definition, there exists a dense subset (resp. a dense open subset) of closed points $W \subseteq \Spec \, A$ such that $S_{\mu}$ is globally $F$-split (resp. globally $F$-regular) for all $\mu \in W$. 
Note that global $F$-splitting (resp. global $F$-regularity) is preserved under the base field extension $\overline{\mathbb{F}_p}/\mathbb{F}_p$. 
It follows from Lemma \ref{F-split lemma} (2) that there exists an effective $\mathbb{Q}$-divisor $\Delta$ on $S_{\bar{\mu}}$ such that $(S_{\bar{\mu}},\Delta)$ is globally sharply $F$-split (resp. globally $F$-regular) and $K_{S_{\bar{\mu}}} + \Delta \sim_{\mathbb{Q}}0$. 
By a property of the Zariski decomposition and Lemma \ref{lem_zariski_nu}, one can see that 
$$\Delta \geq N(-K_{S_{\bar{\mu}}}) \geq N_{\bar{\mu}}.$$ 
Thus, $(S_{\bar{\mu}}, N_{\bar{\mu}})$ is globally sharply $F$-split (resp. globally $F$-regular) for all $\mu \in W$. This means that $(S, N)$ is of dense globally sharply $F$-split type (resp. globally $F$-regular type). 

To prove the latter assertion, we will show that $P_{\bar{\mu}}$ is nef for all $\mu \in W$. 
It follows from Lemma \ref{non-van} that there exists an effective $\Q$-divisor $D=\sum_i d_i D_i$ on $S$ such that $P \sim_{\mathbb{Q}} D$. 
If $P_{\bar{\mu}}$ is not nef, then there exists some curve $C$ on $S_{\mu}$ such that $P_{\bar{\mu}}.C < 0$. Then $C$ is contained in $\mathrm{Supp}\,D_{\bar{\mu}} $, 
that is, there exists $i$ such that $C=D_{i, \bar{\mu}}$. 
On the other hand, 
$$P_{\mu}.C=P_{\mu}.D_{i, \mu}= P.D_{i}\geq0.$$ 
This is a contradiction. 
Therefore, $P_{\bar{\mu}}$ is nef for all $\mu \in W$. 
Note that $P_{\bar{\mu}}. N_{j, \bar{\mu}}=P. N_j=0$ for every irreducible component $N_j$ of $N$ and that $N_{\bar{\mu}}$ is zero or has a negative definite intersection matrix. 
By the uniqueness of the Zariski decomposition (see \cite[(1.12) Theorem]{fujita}), $-K_{S_{\bar{\mu}}}=P_{\bar{\mu}}+N_{\bar{\mu}}$ is the Zariski decomposition of $-K_{S_{\bar{\mu}}}$ for all $\mu \in W$. 
\end{proof}

In the proof of Proposition \ref{nef-ness},  we used the following lemma. 
 \begin{lem}[{\cite[2.6, Remark-Corollary]{shocompl}, \cite{fk}}]\label{non-van}Let $(S, \Delta)$ be a log canonical surface such that $-(K_S+\Delta)$ is nef. Then $\kappa(-(K_S+\Delta)) \geq 0$. 
\end{lem}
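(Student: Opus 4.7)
The plan is to split into two cases according to whether $K_S+\Delta$ is pseudo-effective, combining log abundance in dimension two with the log minimal model program and Shokurov's theory of complements. After passing to a minimal resolution of $(S,\Delta)$, I may assume $S$ is smooth.

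First, suppose $K_S+\Delta$ is pseudo-effective, and take its Zariski decomposition $K_S+\Delta = P + N$, with $P$ nef and $N$ effective whose support has negative-definite intersection matrix (satisfying $P \cdot N_i = 0$ for each component $N_i$). Pairing with the nef class $-(K_S+\Delta) = -P-N$ gives $(-(K_S+\Delta)) \cdot P = -P^2 \geq 0$, so $P^2 = 0$; likewise $(-(K_S+\Delta))^2 = N^2 \geq 0$ combined with negative-definiteness of the support of $N$ forces $N = 0$. Hence $K_S+\Delta = P$ is simultaneously nef and anti-nef, so numerically trivial. The log abundance theorem for log canonical surfaces in characteristic zero then yields $K_S+\Delta \sim_{\mathbb{Q}} 0$, and in particular $\kappa(-(K_S+\Delta)) = 0$.

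Next, suppose $K_S+\Delta$ is not pseudo-effective. I would run a $(K_S+\Delta)$-log MMP, which exists and terminates in dimension two for lc pairs. Each divisorial contraction $\pi : X \to Y$ satisfies $K_X + \Delta_X = \pi^*(K_Y + \Delta_Y) + \alpha E$ with $\alpha > 0$, and a quick intersection computation against $E$ and against other curves shows that nefness of $-(K+\Delta)$ is preserved by the pushforward. The MMP therefore terminates in a Mori fiber space $\phi: S' \to C$ with $-(K_{S'}+\Delta')$ nef. If $\dim C = 0$, then $S'$ has Picard number one and $-(K_{S'}+\Delta')$ is a nonnegative rational multiple of the ample generator, hence $\mathbb{Q}$-effective. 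If $\dim C = 1$, then $\phi$ is a $\mathbb{P}^1$-fibration and, via the canonical bundle formula $K_{S'}+\Delta' \equiv \phi^*(K_C+B_C+M_C)$ with $B_C, M_C$ pseudo-effective, nefness of $-(K_{S'}+\Delta')$ forces $\deg K_C \leq 0$, i.e.\ $g(C) \leq 1$; in this low-genus range Shokurov's theory of bounded complements furnishes an effective divisor linearly equivalent to $-n(K_{S'}+\Delta')$ for a uniformly bounded integer $n$.

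The final step is to transfer $\mathbb{Q}$-effectivity from the MMP output $S'$ back to $S$. The relation $\pi^*(-m(K_{S'}+\Delta')) = -m(K_S+\Delta) + m\alpha E$ means that an arbitrary effective representative of $-m(K_{S'}+\Delta')$ only pulls back to an effective representative of $-m(K_S+\Delta) + m\alpha E$; one must arrange that the complement constructed on $S'$ vanishes to the correct order along each MMP-exceptional divisor. Controlling these vanishing orders is the main obstacle, and is precisely what Shokurov's complement theory together with the companion reference cited alongside the lemma are designed to handle.
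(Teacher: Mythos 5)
The paper gives no proof of this lemma: it simply refers to Shokurov's \emph{Complements on surfaces} (the very reference in the lemma's header) and remarks that for rational $S$ the statement is an easy consequence of Riemann--Roch (indeed, $h^0(-m(K_S+\Delta))\ge \chi(-m(K_S+\Delta))\ge 1$ once one checks $h^2=0$ and $(K_S+\Delta)\cdot K_S\ge (K_S+\Delta)^2\ge 0$). Measured against that, your Case 1 ($K_S+\Delta$ pseudo-effective) is correct and self-contained modulo abundance for numerically trivial lc surface pairs; in fact it can be shortened, since a pseudo-effective class whose negative is nef is orthogonal to an ample class and hence numerically trivial by Hodge index, with no need for the Zariski decomposition. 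This case, however, is not where the content of the lemma lies.

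In Case 2 there are genuine problems. First, the appeal to the canonical bundle formula is wrong: on a Mori fiber space $\phi\colon S'\to C$ the divisor $K_{S'}+\Delta'$ is $\phi$-\emph{anti-ample}, not $\phi$-numerically trivial, so it is never of the form $\phi^*(K_C+B_C+M_C)$. The conclusion $g(C)\le 1$ is still true, but needs a different argument (for instance, $-K_{S'}=-(K_{S'}+\Delta')+\Delta'$ is pseudo-effective, while the anticanonical class of a $\mathbb{P}^1$-fibration over a curve of genus $\ge 2$ is negative on a covering family of multisections). Second, for $\dim C=1$ you invoke ``Shokurov's bounded complements'' to produce the effective member of $|-n(K_{S'}+\Delta')|$; that existence statement is essentially the lemma itself on the terminal model, so at this point your argument has reduced to the citation the paper already makes rather than to something more elementary. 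Third, and most seriously, the descent step is a real gap, not a formality: if $K_X+\Delta_X=\pi^*(K_Y+\Delta_Y)+aE$ with $a>0$, then for an arbitrary effective $G\sim_{\mathbb{Q}}-(K_Y+\Delta_Y)$ the divisor $\pi^*G-aE$ is effective only when $G$ has multiplicity at least $a$ at the blown-down point, which can certainly fail. The fix in Shokurov's theory is to lift the \emph{complement} $\Delta_Y^+$ crepantly (so that $K_X+\Delta_X^+=\pi^*(K_Y+\Delta_Y^+)$ with $n(K_X+\Delta_X^+)\sim 0$) and to verify $\Delta_X^+\ge \Delta_X$ via monotonicity lemmas exploiting the nefness of $-(K_X+\Delta_X)$ and log canonicity; you name the obstacle but do not supply this step, so the proposal does not constitute a proof of the only nontrivial case.
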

\begin{proof}We refer the reader to \cite[2.6, Remark-Corollary, p.3890]{shocompl} for the proof. 
We remark that this lemma is an easy consequence of the Riemann--Roch formula when $S$ is rational. 
\end{proof}
 
We use the notion of divisors of insufficient fiber type and Lemma \ref{lem-insufficient} in the proof of Theorem \ref{F-split surface}. 
Lemma \ref{lem-insufficient} is well-known to experts, but we include its proof here for the reader's convenience.

\begin{defi}[{\cite[Section 5.a in Chapter III]{nakayama-zariski-abun}}]\label{insufficient}
Let $f:S \to C$ be a projective surjective morphism from a smooth projective surface $S$ to a smooth projective curve $C$ with connected fibers over an algebraic closed field (of any characteristic). 
Let $D$ be an effective $f$-vertical $\mathbb{Q}$-divisor on $S$.  
We say that $D$ is of {\em insufficient fiber type} over $C$ if for any closed point $x$ of $\mathrm{Supp}\,f_*D$,  there exists a prime divisor $\Gamma$ on $S$ such that $f(\Gamma) = x$ and $\Gamma \not \subset \Supp\,D$.  
\end{defi}

\begin{lem}[{\cite[5.3 Corollary and 5.7 Proposition in Chapter III]{nakayama-zariski-abun}}]\label{lem-insufficient}
Suppose that $f:S \to C$ and $D$ be the same as in Definition \ref{insufficient}. 
If $D$ is of insufficient fiber type over $C$, then $D$ is not nef and $D=N(D)$. 
\end{lem}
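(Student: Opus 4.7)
The plan is to analyze $D$ fiber by fiber, leaning on Zariski's lemma: for each fiber $F_x$ of $f$, the intersection form on the components of $F_x$ is negative semi-definite with one-dimensional kernel spanned by the class of $F_x$. Since $D$ is $f$-vertical, I decompose $D = \sum_x D_{(x)}$, where $D_{(x)}$ collects the components of $D$ contained in $F_x$; components of distinct fibers do not meet, so these pieces are intersection-theoretically independent of each other.

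First I would show $D$ is not nef. Fix $x \in \Supp f_*D$. The insufficient fiber type hypothesis supplies a component $\Gamma \subseteq F_x$ with $\Gamma \not\subset \Supp D$; thus the coefficient of $\Gamma$ in $D_{(x)}$ is zero while the coefficient of $\Gamma$ in $F_x$ is positive, so $D_{(x)}$ is not a rational multiple of $F_x$ and Zariski's lemma forces $D_{(x)}^2 < 0$. Because only components of $F_x$ can meet $D_{(x)}$, we have $D \cdot D_{(x)} = D_{(x)}^2 < 0$, and writing $D_{(x)} = \sum_i d_i D_i$ with $d_i > 0$ produces at least one irreducible component $D_i$ with $D \cdot D_i < 0$. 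In particular $D$ is not nef.

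Next I would show $D = N(D)$, i.e.\ $P(D) = 0$. The Zariski decomposition $D = P(D) + N(D)$ of the effective divisor $D$ has $N(D) \leq D$ componentwise, so $P(D)$ is effective, $f$-vertical, and supported in $\Supp D$. Suppose for contradiction $P(D) \neq 0$ and pick $x$ so that $F_x$ meets $\Supp P(D)$. Nefness of $P(D)$ gives $P(D) \cdot \Gamma' \geq 0$ for every component $\Gamma'$ of $F_x$; pairing the resulting vector of intersection numbers against the vector of positive multiplicities of $F_x$ and using $F_x \cdot \Gamma' = 0$ forces $P(D) \cdot \Gamma' = 0$ for every such $\Gamma'$. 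Hence $P(D)|_{F_x}$ lies in the kernel of the fiber intersection form, and Zariski's lemma makes it a positive rational multiple of $F_x$. But then $\Supp F_x \subseteq \Supp P(D) \subseteq \Supp D$, contradicting the insufficient fiber type assumption; therefore $P(D) = 0$ and $D = N(D)$.

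The main obstacle to watch is the second step: one has to confirm that the Zariski decomposition respects the vertical structure (so $P(D)$ inherits effectivity and verticality from $D$) and then deploy the classical ``an effective, nef, vertical divisor is fiberwise a non-negative multiple of the fiber'' argument cleanly enough to push $\Supp P(D)$ into $\Supp D$, where the insufficient fiber type hypothesis can deliver the contradiction. Once this bookkeeping is in place, both conclusions are immediate consequences of Zariski's lemma.
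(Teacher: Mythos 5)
Your argument is correct, but it takes a somewhat different route from the paper's: you invoke Zariski's lemma on the fiber intersection form at both steps, whereas the paper avoids it. For non-nefness the paper never computes $D_{(x)}^2<0$; it uses connectedness of the fiber $F_x=\sum m_i\Gamma_i$ to produce a component $\Gamma$ with $\Gamma\not\subset\Supp D$ but $\Gamma$ meeting $\Supp D$, so that $D\cdot\Gamma>0$, and then plays this off against $D\cdot F_x=0$: nefness would make every other term of $\sum m_iD\cdot\Gamma_i$ nonnegative, a contradiction. For the second step the paper is slicker still: since $P(D)$ is effective with $\Supp P(D)\subseteq\Supp D$, it is itself of insufficient fiber type unless it vanishes, so the first step applied to the nef divisor $P(D)$ forces $P(D)=0$; your version instead reproves directly, via the kernel statement in Zariski's lemma, that a nonzero nef effective vertical divisor must contain an entire fiber in its support. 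Both routes rest on the fact you rightly flag, namely $N(D)\le D$ (equivalently $P(D)\ge 0$) for effective $D$; this follows from the minimality of the negative part in Fujita's theorem applied to the trivial decomposition $D=0+D$, so your "main obstacle" is genuinely harmless. What your approach buys is an explicit irreducible curve on which $D$ is negative and a uniform mechanism for both conclusions; what the paper's buys is brevity, by reducing the second claim to the first.
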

\begin{proof}
Suppose to the contrary that $D$ is nef. 
Since $D$ is of insufficient fiber type over $C$,  taking into account that all fibers of $f$ are connected, we can find a curve $\Gamma$ such that $f(\Gamma) \subseteq \mathrm{Supp}\,f_*D$ and $D.\Gamma >0$. 
Let $F=f^*f(\Gamma)$. Then $D.F=0$ and $F \geq \Gamma$, but this contradicts the nefness of $D$.  
Thus, every divisor of insufficient fiber type is not nef. 
Let $D=P+N$ be the Zariski decomposition of $D$. 
Then $P$ is also of insufficient fiber type, unless $P$ is zero. 
Since $P$ is nef, $P$ has to be zero, that is, $D=N$.
\end{proof}

%%%%%%%%%%%%%%%%%%%%%%%%%%%%%%%%%%%%%%%%%%%%%%%%%%%%%%%%%%%%%%%%%%%%%%%%%%%%%%%%%%%%%%%%%%%%%%%%%%%%%%%

\section{Main results}\label{main section} 
In this section, we prove Theorem \ref{main1}. 
First we show the globally $F$-regular case. 

\begin{thm}[cf.~\cite{okawa_phd}]\label{F-reg surface}
Let $S$ be a normal projective surface over an algebraically closed field $k$ of characteristic zero. 
Then $S$ is of globally F-regular type if and only if $S$ is of Fano type.
\end{thm}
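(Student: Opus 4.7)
The ``if'' direction is Proposition \ref{just singularities} (2), so one only has to show the converse: if $S$ is of globally $F$-regular type then $S$ is of Fano type. The plan is to produce an effective $\mathbb{Q}$-divisor $N$ on $S$ such that $(S,N)$ is klt and $-(K_S+N)$ is nef and big, so that Remark \ref{rm-weakfano} concludes $S$ is of Fano type. First I would reduce to the smooth case via the minimal resolution $f \colon \widetilde{S}\to S$. For $\mu$ in a dense open subset of a suitable model, $\widetilde{S}_{\mu} \to S_{\mu}$ is still the minimal resolution, because the self-intersections of the exceptional curves (all $\le -2$) are preserved under reduction modulo $p$; hence by Remark \ref{min resolution}, $\widetilde{S}_{\mu}$ is globally $F$-regular, and therefore $\widetilde{S}$ is of globally $F$-regular type. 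Since Fano type descends under birational morphisms (Lemma \ref{images}), it suffices to prove the theorem for $\widetilde{S}$, and one may assume $S$ is smooth.

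With $S$ smooth, $-K_S$ is pseudo-effective: Lemma \ref{F-split lemma} (2) shows that $-K_{S_{\bar{\mu}}}$ is big for general $\mu$, and Lemma \ref{pseudo-effective-d} then yields pseudo-effectivity in characteristic zero. Write the Zariski decomposition $-K_S = P + N$. By Proposition \ref{nef-ness}, the pair $(S,N)$ is of globally $F$-regular type---and hence klt by Proposition \ref{just singularities} (1)---and for $\mu$ in a dense open subset $W$ of closed points the identity $-K_{S_{\bar{\mu}}} = P_{\bar{\mu}} + N_{\bar{\mu}}$ is again the Zariski decomposition. Intersecting with curves on $S$ shows that $P$ is itself nef, since $P_{\bar{\mu}}$ is nef for every $\mu \in W$.

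The core of the argument is the bigness of $P$. For any $\mu \in W$, Lemma \ref{F-split lemma} (2) applied to the globally $F$-regular variety $S_{\bar{\mu}}$ produces an effective $\mathbb{Q}$-divisor $\Delta_{\mu}$ with $-(K_{S_{\bar{\mu}}}+\Delta_{\mu})$ ample; thus $-K_{S_{\bar{\mu}}} \sim_{\mathbb{Q}} \Delta_{\mu} + A_{\mu}$ with $A_{\mu}$ ample, and in particular $-K_{S_{\bar{\mu}}}$ is big. On a smooth projective surface the volume of a pseudo-effective divisor equals the self-intersection of the positive part of its Zariski decomposition, whence $(P_{\bar{\mu}})^2 > 0$; since intersection numbers are preserved under reduction, $P^2 > 0$. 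Combined with nefness, $P = -(K_S+N)$ is nef and big with $(S,N)$ klt, and Remark \ref{rm-weakfano} concludes.

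The main obstacle is precisely this bigness of $P$: the positive-characteristic input from Lemma \ref{F-split lemma} (2), which yields a complement making $-K_{S_{\bar{\mu}}}$ big, is essential, and the compatibility of the Zariski decomposition with reduction in Proposition \ref{nef-ness} is what allows one to transport the bigness back to characteristic zero. The remaining reductions---minimal resolution, nefness of $P$, klt-ness of $(S,N)$---are comparatively routine given the framework established in the earlier sections.
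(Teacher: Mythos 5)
Your proposal is correct and follows essentially the same route as the paper: reduce to the smooth case via the minimal resolution, take the Zariski decomposition $-K_S=P+N$, invoke Proposition \ref{nef-ness} to get that $(S,N)$ is klt and that the decomposition commutes with reduction mod $p$, and deduce $P^2=P_{\bar{\mu}}^2>0$ from the bigness of $-K_{S_{\bar{\mu}}}$. The only differences are expository (you spell out the pseudo-effectivity of $-K_S$ and the source of bigness in characteristic $p$, which the paper leaves implicit).
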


\begin{proof}[Proof of Theorem \ref{F-reg surface}] 
The if part follows from Proposition \ref{just singularities} (2), so we will prove the only if part. 
Taking the minimal resolution, by Lemma \ref{images} and Remark \ref{min resolution}, we may assume that $S$ is smooth. 
Let $-K_S=P+N$ be the Zariski decomposition of $-K_S$. 
We take a model of $(S, P, N)$ over a finitely generated $\Z$-subalgebra $A$ of $k$ such that
$S_{\mu}$ is globally $F$-regular for all closed points $\mu \in \Spec \, A$. 
Denote by $S_{\bar{\mu}}$ the base change of $S_{\mu}$ to the algebraic closure of $k(\mu)$. 
Similarly for $P_{\bar{\mu}}$ and $N_{\bar{\mu}}$. 
It then follows from Proposition \ref{nef-ness} that  $(S, N)$ is klt and $-K_{S_{\bar{\mu}}}=P_{\bar{\mu}}+N_{\bar{\mu}}$ is the Zariski decomposition of $-K_{S_{\bar{\mu}}}$ for general $\mu$. 
Since $-K_{S_{\bar{\mu}}}$ is big, $P_{\bar{\mu}}$ is also big. 
Then $P^2=P_{\bar{\mu}}^2>0$, so $P$ is nef and big. 
Thus, $(S, N)$ is a klt weak log del Pezzo surface, which means that $S$ is of Fano type.
\end{proof}

\begin{rem}
Theorem \ref{F-reg surface} was first proved by Okawa in his Ph.D thesis \cite{okawa_phd}.
His proof depends on the deformation theory in mixed characteristic and the minimal model theory for surfaces in positive characteristic. 
Our proof is more geometric, just a direct application of Proposition \ref{nef-ness}, without using the deformation theory and the minimal model theory in positive characteristic. 
During the preparation of this manuscript, Hwang and Park \cite{hp} announced that they proved Theorem \ref{F-reg surface} using results of Sakai \cite{sakai} (see also \cite{sakai2}, \cite{sakai3}). 
Some (not all) results in Section \ref{main section} may follow from Sakai's results, but our argument is substantially different from his argument.  
\end{rem}

Next we prove the globally $F$-split case. 

\begin{thm}\label{F-split surface}Let $S$ be a normal projective surface of dense globally $F$-split type over an algebraically closed field of characteristic zero. Then $S$ is of Calabi-Yau type.
\end{thm}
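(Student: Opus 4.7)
The plan is to produce an effective $\mathbb{Q}$-divisor $\Delta$ on $S$ satisfying $K_S+\Delta \sim_{\mathbb{Q}} 0$ and $(S,\Delta)$ log canonical. By Lemma \ref{images} and Remark \ref{min resolution}, I may replace $S$ by its minimal resolution and so assume $S$ is smooth. Using Lemma \ref{F-split lemma} (2) after reduction modulo $p$ together with Lemma \ref{pseudo-effective-d}, one sees that $-K_S$ is pseudo-effective, so it admits a Zariski decomposition $-K_S = P + N$. By Proposition \ref{nef-ness}, $(S, N)$ is already log canonical and the decomposition commutes with reduction modulo $p$ for a dense set of primes. Thus it remains to find an effective $\mathbb{Q}$-divisor $M \sim_{\mathbb{Q}} P$ such that $(S, N+M)$ is still log canonical, and the real work is controlling $M$.

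I would split into two cases according to whether $S$ is rational or not. If $S$ is not rational, the classification of smooth projective surfaces combined with pseudo-effectivity of $-K_S$ (and the fact that K3, Enriques, abelian and hyperelliptic surfaces all have $K_S \sim_{\mathbb{Q}} 0$ and are thus trivially of Calabi--Yau type) forces the remaining case to be a ruled surface $\mathbb{P}_E(V)$ over an elliptic curve $E$ with $\deg V = 0$. Via Proposition \ref{trivial vec} the decomposable case is handled directly, while the indecomposable degree-zero case is precisely the content of Example \ref{elliptic ruled} (Mehta--Srinivas), where one shows that even though $\mathbb{P}_E(V)$ is globally $F$-split only in special characteristics, it is always of log Calabi--Yau type.

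So I assume $S$ is rational. Using the Zariski decomposition of $-K_S$ together with the Laface--Testa result for rational surfaces, I would reduce to the situation where $-K_S$ is itself nef and there is a genuine effective divisor $D$ linearly equivalent to $-K_S$; thus $K_S + D \sim 0$ and the task reduces to picking a $D$ for which $(S,D)$ is log canonical. Lemma \ref{non-van} produces such a $D$, and Lemma \ref{lem-insufficient} allows me to discard any insufficient-fiber-type components. With these adjustments I would arrange that for a dense set of primes $\mu$, the reduction $S_\mu$ is a minimal elliptic surface $f_\mu\colon S_\mu \to C_\mu$ and $D_\mu$ is an indecomposable curve of canonical type, i.e.\ a multiple of a fiber of $f_\mu$.

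The final step is a case analysis by Kodaira's table. If the fiber type of $D_\mu$ is one of $\mathrm{II}, \mathrm{III}, \mathrm{IV}, I_n^*, \mathrm{II}^*, \mathrm{III}^*, \mathrm{IV}^*$, I would verify by direct Frobenius-splitting computations on each configuration that the global $F$-splitting of $S_\mu$ can be upgraded to a global sharp $F$-splitting of the pair $(S_\mu, D_\mu)$ for infinitely many $p$; Proposition \ref{just singularities} (1) then yields log canonicity of $(S,D)$. For type $I_n$, the divisor $D$ is a reduced cycle of smooth rational curves meeting transversally, hence a simple normal crossing divisor, and $(S,D)$ is log canonical automatically. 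In either case, $S$ is of Calabi--Yau type. The main obstacle I foresee is the type-by-type verification in Kodaira's list: one must check fiber configuration by fiber configuration that an $F$-splitting of $S_\mu$ can be chosen compatibly with the support of $D_\mu$, and moreover that this persists for \emph{infinitely many} primes $\mu$, not just one, so that the log canonicity passes back to characteristic zero.
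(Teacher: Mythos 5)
Your outline does track the paper's overall strategy (minimal resolution, Zariski decomposition $-K_S=P+N$ via Proposition \ref{nef-ness}, the rational/non-rational dichotomy, Laface--Testa to reduce to $-K_S$ nef with an effective $D\sim -K_S$, reduction to a minimal elliptic fibration in characteristic $p$ with $D_\mu$ an indecomposable curve of canonical type, and the case split ``type $I_n$ versus not''), but two steps are wrong or missing as stated. First, the non-rational case: you assert that for $V$ indecomposable of degree zero the surface $\mathbb{P}_E(V)$ ``is always of log Calabi--Yau type,'' citing Example \ref{elliptic ruled}. That is a statement about the characteristic-$p$ surface; over $\mathbb{C}$ the surface $\mathbb{P}_E(V)$ with $V$ the nonsplit extension of $\mathcal{O}_E$ by $\mathcal{O}_E$ is \emph{not} of Calabi--Yau type, since by Atiyah's classification $h^0(S^{2m}V)=1$ for all $m$, so the only effective $\mathbb{Q}$-divisor $\mathbb{Q}$-linearly equivalent to $-K$ is $2C_0$, and $(X,2C_0)$ is not log canonical; this is precisely the standard example showing that nef $-K$ does not imply Calabi--Yau type. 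The correct use of Example \ref{elliptic ruled} is the opposite one: Mehta--Srinivas prove this surface is \emph{not} globally $F$-split (for $p\ge 5$), so if $S$ were not of Calabi--Yau type one would, via Lemma \ref{non-comp} (which also handles non-minimal non-rational $S$, where your appeal to the classification is incomplete), produce such a surface of dense globally $F$-split type --- a contradiction. As written, your argument proves a false statement rather than excluding the case.

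Second, the non-$I_n$ step. ``Direct Frobenius-splitting computations on each configuration'' is not yet an argument: a splitting of $\mathcal{O}_{S_\mu}\to F_*\mathcal{O}_{S_\mu}$ corresponds to a divisor in $|{-(p-1)K_{S_\mu}}|$, and to upgrade it to a sharp splitting of the pair $(S_\mu,D_\mu)$ you must know that this divisor dominates $(p-1)D_\mu$; nothing in your proposal constrains \emph{which} splitting of $S_\mu$ you are handed. The paper's key idea is that for non-$I_n$ indecomposable curves of canonical type one has $\mathrm{Pic}^0(D_\mu)\cong\mathbb{G}_a$ (Lemma \ref{pic0}), so $\mathcal{O}(D_\mu)|_{D_\mu}$ is $p$-torsion; combined with $\kappa(-K)=0$ this forces $h^0(-(p-1)K_{S_\mu})=1$, so the splitting divisor \emph{must} be $(p-1)D_\mu$ and the sharp splitting of the pair follows uniformly, with no case-by-case check over Kodaira's table. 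You also gloss over descending the relative minimal model $\varphi\colon S_{\bar\mu}\to S'$ back to characteristic zero --- one needs the insufficient-fiber-type lemma to show every $\varphi$-exceptional curve is a component of $D_{\bar\mu}$ and hence lifts --- and over the proof that $\kappa(-K_S)=0$ (abundance for nef divisors with $\nu=\kappa$ together with the non-semi-ampleness of $P$), both of which are needed before the final dichotomy makes sense.
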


\begin{rem}
The converse of Theorem \ref{F-split surface} is open. 
It is known by \cite{FT} that if $S$ is a klt projective surface over an algebraically closed field of characteristic zero such that $K_S \sim_{\Q} 0$, then it is of dense globally $F$-split type. 
\end{rem}

Before giving the proof of Theorem \ref{F-split surface}, we state a couple of lemmas that will be used in the proof. 

\begin{lem}[{\cite[2.6, Remark-Corollary]{shocompl}, \cite{fk}}]\label{non-comp}
Let $(S, \Delta)$ be a projective dlt surface over an algebraically closed field of characteristic zero such that $-(K_S+\Delta)$ is nef and $S$ is a non-rational smooth surface. 
If $S$ is not of Calabi--Yau type, then there exist a birational morphism $\phi:S \to Z$ and an indecomposable rank $2$ vector bundle $V$ with trivial determinant over an elliptic curve $E$ such that 
$Z \simeq \mathbb{P}_E(V)$ and $K_S+\Delta \sim_{\mathbb{Q}, Z} 0$. 
\end{lem}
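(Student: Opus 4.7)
The plan is to reduce to the classification of non-rational geometrically ruled surfaces via a $(K_S+\Delta)$-trivial MMP, and then to analyse when a log canonical complement of $(S,\Delta)$ can fail to exist.

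First I would show that $\kappa(S)=-\infty$. Since $-(K_S+\Delta)$ is nef and $\Delta$ is effective, $-K_S$ is pseudo-effective; were $K_S$ also pseudo-effective then $K_S\equiv 0$, forcing $\Delta=0$ (an effective anti-nef divisor on a smooth surface must vanish) and hence $K_S\sim_{\Q} 0$ by the Enriques--Kodaira classification in characteristic zero. This would make $(S,0)$ log Calabi--Yau, contrary to hypothesis. Thus $S$ is uniruled, and being smooth and non-rational it is ruled over a smooth curve $E$ of genus $g=q(S)\geq 1$.

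Next, I would contract the $(K_S+\Delta)$-trivial divisorial extremal rays on $S$: on a smooth surface these are exactly the $(-1)$-curves $C$ with $\Delta\cdot C=1$. Contracting them yields a birational morphism $\phi:S\to Z$ with $K_S+\Delta=\phi^{*}(K_Z+\phi_{*}\Delta)$, so that $K_S+\Delta\sim_{\Q,Z}0$ is built in by construction, while $(Z,\Delta_Z:=\phi_{*}\Delta)$ is still log canonical and $-(K_Z+\Delta_Z)$ is still nef. Using that the Albanese map of $Z$ factors through the ruling $\psi:Z\to E$ and that nefness of $-(K_Z+\Delta_Z)$ precludes further $(K_Z+\Delta_Z)$-trivial divisorial contractions in the fibres, one concludes $Z=\mathbb{P}_{E}(V)\to E$ is a geometrically ruled surface for some rank $2$ bundle $V$ on $E$.

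The heart of the argument is a classification step on $Z$. By Lemma \ref{non-van} one has an effective $\Q$-divisor $D_Z\sim_{\Q}-(K_Z+\Delta_Z)$. Decomposing $D_Z$ into its $\psi$-horizontal and $\psi$-vertical parts and using the explicit nef/effective cone descriptions of $\mathbb{P}_{E}(V)$ (see e.g.~Hartshorne, Chapter V.2) in terms of $g(E)$ and the normalised invariant of $V$, I would establish the following dichotomy: either there exists an effective $\Q$-divisor $\Delta_Z'\sim_{\Q}-(K_Z+\Delta_Z)$ with $(Z,\Delta_Z+\Delta_Z')$ log canonical---whence $(Z,\Delta_Z+\Delta_Z')$ is log Calabi--Yau and Lemma \ref{images} forces $S$ to be of Calabi--Yau type, contradicting our standing assumption---or one is in the Atiyah configuration: $g(E)=1$ and $V$ is indecomposable with trivial determinant. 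In the latter case $-K_Z\sim 2C_{0}$ is rigid (where $C_{0}$ is the unique section of self-intersection $0$) and no log canonical complement of $(Z,\Delta_Z)$ exists. The hypothesis thus forces the Atiyah configuration, giving the desired conclusion.

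The main obstacle is this last classification step: one must enumerate the possibilities for $(g(E),V)$ and, in every non-Atiyah case, produce an explicit log canonical complement of $(Z,\Delta_Z)$. This requires careful intersection-theoretic analysis on $\mathbb{P}_{E}(V)$ together with Bertini-type arguments ensuring that the enlarged boundary $\Delta_Z+\Delta_Z'$ remains log canonical.
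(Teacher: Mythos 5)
The paper gives no argument for this lemma at all: it is quoted verbatim from Shokurov's classification of complements on surfaces (\cite[2.6, Remark--Corollary]{shocompl}), so your proposal has to be judged as an attempted reproof. Your first step ($\kappa(S)=-\infty$, hence $S$ is birationally ruled over a curve of genus $\ge 1$) is fine. The first genuine gap is in step 2: it is not justified that one can reach a geometrically ruled model by contracting \emph{only} $(K_S+\Delta)$-trivial rays. For a $(-1)$-curve $C$ contained in a fibre of the ruling, nefness of $-(K_S+\Delta)$ only gives $(K_S+\Delta)\cdot C\le 0$, not $=0$; after exhausting the trivial rays the surface need not be relatively minimal over $E$, and ``no further trivial contractions exist'' is not a reason for $Z$ to be geometrically ruled. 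If you are forced to contract a strictly $(K_S+\Delta)$-negative ray, crepancy fails, and then the whole transfer of a complement from $Z$ back to $S$ collapses. Relatedly, you invoke Lemma \ref{images} in the wrong direction: that lemma pushes the Calabi--Yau property \emph{down} along $\phi:S\to Z$, whereas you need to \emph{lift} a log Calabi--Yau structure $(Z,\Delta_Z+\Delta_Z')$ to $S$; this only works via the crepant identity $K_S+\Delta+\phi^*\Delta_Z'=\phi^*(K_Z+\Delta_Z+\Delta_Z')$, i.e.\ it again hinges on the unproved crepancy of $\phi$.

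The second and more serious issue is that the ``classification step'' you defer as the main obstacle \emph{is} the content of the lemma. One must (a) rule out $g(E)\ge 2$ by intersecting $-(K_Z+\Delta_Z)$ with the negative section; (b) for $g(E)=1$ and each non-Atiyah $V$, actually produce an effective $\Delta_Z'\sim_{\Q}-(K_Z+\Delta_Z)$ with $(Z,\Delta_Z+\Delta_Z')$ log canonical (e.g.\ the two disjoint sections in the decomposable degree-zero case, a smooth elliptic bisection in the degree-one indecomposable case); and (c) in the Atiyah case show that no log canonical complement exists, using $\kappa(-K_Z)=0$ and the fact that the unique effective divisor in $|-mK_Z|$ is $2mC_0$, which is not a log canonical boundary. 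None of this is carried out, so the proposal establishes a plausible reduction framework but not the statement. To make it a proof you would either have to supply this case analysis in full or, as the authors do, simply cite Shokurov.
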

\begin{proof}
We refer to \cite[2.6, Remark-Corollary, p.3890]{shocompl} for the proof.
\end{proof}

\begin{lem}\label{pic0}
Let $F$ be an indecomposable curve of canonical type over an algebraically closed field $k$ of positive characteristic. 
If $F$ is not of type $I_n$ where $n \ge 0$, then 
$$\mathrm{Pic}^0\,F \simeq \mathbb{G}_a(k), $$
where $\mathbb{G}_a(k)$ is the additive group of $k$.  
See \cite[Section 2]{hl} or \cite[Definition in Section 2]{mum-enriques} for the definition of indecomposable curves of canonical type. 
 \end{lem}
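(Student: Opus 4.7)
The plan is to identify $\mathrm{Pic}^0\,F$ using the classification of one-dimensional connected commutative algebraic groups over the algebraically closed field $k$: such a group is isomorphic to exactly one of an elliptic curve, $\mathbb{G}_m(k)$, or $\mathbb{G}_a(k)$. Since $F$ is an indecomposable curve of canonical type we have $\omega_F \cong \mathcal{O}_F$ and $p_a(F) = h^1(F,\mathcal{O}_F) = 1$, so $\mathrm{Pic}^0\,F$ is a smooth connected commutative algebraic group of dimension one, and it suffices to rule out the first two possibilities. The elliptic-curve case is disposed of at once: $\mathrm{Pic}^0\,F$ is proper if and only if $F$ itself is a smooth curve of genus one, i.e.\ of Kodaira type $I_0$, which is excluded by hypothesis.

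To rule out $\mathbb{G}_m$, I would analyze the maximal torus of $\mathrm{Pic}^0\,F$ via the short exact sequence
$$1 \to \mathcal{O}_F^{\times} \to \pi_{*}\mathcal{O}_{\tilde F}^{\times} \to \mathcal{Q} \to 1$$
associated to the normalization $\pi\colon \tilde F \to F_{\mathrm{red}}$, together with the nilpotent extension $1 \to 1+\mathcal{N}_F \to \mathcal{O}_F^{\times} \to \mathcal{O}_{F_{\mathrm{red}}}^{\times} \to 1$ in case $F$ is non-reduced. Every irreducible component of $F_{\mathrm{red}}$ is a smooth rational curve (by adjunction, using the canonical-type condition $K_S \cdot C_i = 0$ together with the exclusion of $I_0$ to handle the components of self-intersection zero), so $\mathrm{Pic}^0\,\tilde F = 0$. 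Taking cohomology then presents $\mathrm{Pic}^0\,F$ as built from three kinds of contributions: a combinatorial piece coming from the dual graph of $F_{\mathrm{red}}$, a local Picard piece at each singular point of $F_{\mathrm{red}}$, and a unipotent piece from the nilpotent thickening $\mathcal{N}_F$. A $\mathbb{G}_m$-factor can arise only from a cycle in the dual graph or from a nodal singularity; the nilpotent piece and every non-nodal local singularity type (cusp, tacnode, ordinary triple point) contribute only unipotent factors.

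Finally, I would go through Kodaira's table of singular fibres. Nodes and cyclic dual graphs occur precisely in the types $I_n$ with $n \geq 1$ (the nodal rational curve $I_1$ and the $n$-cycles of rational curves for $n \geq 2$). Every other type, namely $II$, $III$, $IV$, $I_n^{*}$, $II^{*}$, $III^{*}$, $IV^{*}$, has a tree-like reduced dual graph whose singularities are all cuspidal, tacnodal, or ordinary triple points, so for these types no $\mathbb{G}_m$-factor appears. The one-dimensional group $\mathrm{Pic}^0\,F$ is therefore unipotent and must be isomorphic to $\mathbb{G}_a(k)$. The main obstacle is the local verification that each non-nodal singularity appearing on Kodaira's list contributes only unipotently to the local Picard, which is carried out by direct computation on the explicit local equations; this is classical and also appears in Mumford's treatment of (quasi-)elliptic fibrations in positive characteristic \cite{mum-enriques}.
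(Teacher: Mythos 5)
Your argument is essentially correct, but it is worth noting that the paper does not prove this lemma at all: it simply cites Harbourne--Lang \cite[Proposition 5.2]{hl}, and what you have written is in effect a self-contained reconstruction of the standard proof behind that citation. The skeleton is sound: $\operatorname{Pic}^0 F$ is a smooth connected one-dimensional group (smoothness because $H^2(F,\mathcal{O}_F)=0$ for a curve, and $\dim = h^1(F,\mathcal{O}_F)=1$ via $\omega_F\cong\mathcal{O}_F$ and duality), so it is an elliptic curve, $\mathbb{G}_m(k)$, or $\mathbb{G}_a(k)$; the rationality of all components forces $\operatorname{Pic}^0 F$ to be affine, and the case-by-case inspection of Kodaira's list (which is characteristic-free, including $p=2,3$) kills the torus. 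Two small points deserve tightening. First, your claim that ``every irreducible component of $F_{\mathrm{red}}$ is a smooth rational curve'' is false for the irreducible types $I_1$ and $II$, where the unique component is a singular rational curve of arithmetic genus one; what you actually need, and what is true, is that the normalization $\tilde F$ is a disjoint union of copies of $\mathbb{P}^1$, so $\operatorname{Pic}^0\tilde F=0$. Second, when you say a $\mathbb{G}_m$-factor ``can arise only from a cycle in the dual graph or from a nodal singularity,'' be careful that the leading-coefficient tori $(k^{*})^{b_x}/k^{*}$ at an ordinary $b_x$-fold point are not automatically unipotent contributions; they are killed only after quotienting by the image of the global units $\prod_i k^{*}$ of the components, and it is exactly the first Betti number of the bipartite branch graph that survives. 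Once these are stated precisely, the local computations for the cusp, tacnode, ordinary triple point, and the nilpotent thickenings of the $*$-types are all filtered with $\mathbb{G}_a$ graded pieces, and a one-dimensional smooth connected unipotent group over an algebraically closed field is $\mathbb{G}_a(k)$, completing the proof. Your route is therefore a legitimate (and more informative) alternative to the paper's bare reference.
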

 
 \begin{proof}
We refer to \cite[Proposition 5.2]{hl} for the proof. 
\end{proof}

Now we start the proof of Theorem \ref{F-split surface}.

\begin{proof}[Proof of Theorem \ref{F-split surface}] 
Taking the minimal resolution, by Lemma \ref{images} and Remark \ref{min resolution}, we may assume that $X$ is smooth. 
Let $-K_S=P+N$ be the Zariski decomposition of $-K_S$. 
It then follows from Proposition \ref{nef-ness} that $(S, N)$ is a log canonical pair of dense globally sharply $F$-split type. 
Taking a smooth dlt blow-up of $(S, N)$ (see also \cite[Proposition--Definition 3.1.1]{prok-compl}), by Lemmas \ref{images} and \ref{lem_crepant}, we can assume that there exists an effective $\Q$-divisor $\Delta$ on $S$ such that $(S, \Delta)$ is dlt with $-(K_S+\Delta)$ nef ($S$ is still smooth).   

Suppose to the contrary that $S$ is not of Calabi--Yau type. 
Note that $P$ is not semi-ample. We will use this fact repeatedly in the rest of the proof. 
If $S$ is not rational, then we obtain a smooth projective surface $Z$ as in Theorem \ref{non-comp},  which is of globally $F$-split type by Lemma \ref{F-split lemma} (1). 
This, however, contradicts Example \ref{elliptic ruled}, and thus, $S$ is a smooth rational surface. 

\begin{cl}\label{claim1}
$\kappa(-K_S)=\kappa(-(K_S+N))=0$ and $\nu(-(K_S+N))=1$, where 
$\nu(L)$ is the numerical dimension of a nef Cartier divisor $L$,  which is defined to be the largest natural number $\nu$ such that the cycle $L^{\nu}$ is numerically nontrivial $($cf. \cite[Definition 6-1-1]{kamama}$)$. 
\end{cl}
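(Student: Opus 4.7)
\textbf{Proof proposal for Claim \ref{claim1}.} My plan is to reduce the statement to showing $\kappa(P)=0$ and $\nu(P)=1$, where $-K_S=P+N$ is the Zariski decomposition; the identity $\kappa(-K_S)=\kappa(P)$ is a standard consequence of the Zariski decomposition on a surface (the negative part contributes no sections). The main leverage, to be established at the outset and used repeatedly, is that $P$ is \emph{not} semi-ample: otherwise, since $(S,N)$ is log canonical and $-(K_S+N)=P$ is semi-ample, Remark \ref{rm-weakfano} would place $S$ into Calabi--Yau type, contradicting our standing hypothesis.

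I would next dispatch the easy bounds. The non-vanishing Lemma \ref{non-van} applied to the lc pair $(S,N)$ with nef $-(K_S+N)=P$ yields $\kappa(P)\ge 0$, and $P\equiv 0$ would give $P\sim_{\mathbb{Q}} 0$ (numerical equals $\mathbb{Q}$-linear equivalence on the rational surface $S$), hence $P$ semi-ample and a contradiction; so $\nu(P)\ge 1$. To upper-bound $\nu(P)\le 1$, suppose $P$ is big and pick a decomposition $P\sim_{\mathbb{Q}}\epsilon H+F$ with $H$ ample and $F$ effective. For any small $\delta>0$,
\[ P \sim_{\mathbb{Q}} \bigl((1-\delta)P+\delta\epsilon H\bigr) + \delta F, \]
where the first bracket is ample (nef plus ample) and $\delta F$ has arbitrarily small coefficients. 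Shrinking $\delta$ keeps $(S,N+\delta F)$ log canonical, while $-(K_S+N+\delta F)$ is ample, hence semi-ample; Remark \ref{rm-weakfano} then forces $S$ into Calabi--Yau type, a contradiction. Thus $\nu(P)=1$ and $P^2=0$.

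The remaining step is to rule out $\kappa(P)\ge 1$. For $m\gg 0$, I would write $mP=M+\Phi$ with $M$ the mobile part and $\Phi$ the fixed part. Nefness of $P$ and effectivity give $P\cdot M\ge 0$ and $P\cdot\Phi\ge 0$, and their sum is $mP^2=0$, so both vanish; a quick expansion then yields $M^2=0$, so $|M|$ is composed with a pencil defining (after Stein factorization) a fibration $g:S\to\mathbb{P}^1$ whose general fiber $F_0$ satisfies $P\cdot F_0=0$. Since $P$ and $F_0$ are two nonzero nef classes of self-intersection zero that are mutually orthogonal, the Hodge index theorem (the Witt index of the intersection form on $\mathrm{NS}(S)_{\mathbb{R}}$ is $1$) forces $P\equiv\lambda F_0$ for some $\lambda\in\mathbb{Q}_{>0}$; once more using that numerical equals $\mathbb{Q}$-linear equivalence on the rational surface $S$, $P\sim_{\mathbb{Q}}\lambda F_0$ is semi-ample, contradicting the key observation. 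Combining, $\kappa(P)=0$ and $\nu(P)=1$, which proves the claim. The most delicate point is this last Hodge-theoretic proportionality argument; the rest is routine perturbation combined with non-vanishing and surface intersection theory.
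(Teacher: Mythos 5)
Your reduction to $\kappa(P)=0$, $\nu(P)=1$ is fine, and two of your three steps are correct and in fact more self-contained than the paper's: for $\nu(P)\ge 1$ you use that numerical and $\mathbb{Q}$-linear equivalence agree on a smooth rational surface, and for ruling out $\kappa(P)=\nu(P)=1$ your mobile-part/Hodge-index argument is a correct elementary reproof of the abundance input that the paper instead imports from Mourougane--Russo and Fujino--Gongyo (via ``if $-(K_S+N)$ were abundant, $P$ would be semi-ample''). So far so good.

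The genuine gap is in your upper bound $\nu(P)\le 1$. You assert that for small $\delta>0$ the pair $(S,N+\delta F)$ remains log canonical. That is the standard perturbation for \emph{klt} pairs, but log canonicity is not an open condition, and here it must fail whenever $(S,N)$ is strictly lc --- which is exactly the case the Calabi--Yau-type theorem has to handle. Concretely, since $F\sim_{\mathbb{Q}}P-\epsilon H$ and $P\cdot N_i=0$ for every component $N_i$ of $N$, one gets $F\cdot N_i=-\epsilon H\cdot N_i<0$, so $\operatorname{Supp}F$ contains every component of $N$; if some $N_i$ has coefficient $1$ in $N$ (or $F$ passes through a zero-dimensional non-klt center), then $N+\delta F$ violates log canonicity for \emph{every} $\delta>0$. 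There is no freedom to choose $F$ avoiding $\operatorname{Supp}N$. Also note that Remark \ref{rm-weakfano} in the lc case requires $-(K_S+\Delta)$ semi-ample, so you genuinely need the perturbed divisor to be ample together with the pair staying lc; neither half can be dropped. The paper closes this step differently: since $S$ is a smooth rational surface, if $-K_S$ were big then by \cite{big} $S$ would be a Mori dream space, on which every nef divisor --- in particular $P$ --- is semi-ample, contradicting the standing fact that $P$ is not semi-ample. You would need to substitute that (or an equivalent input) for your perturbation; as written, the bound $P^2=0$ is not established.
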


\begin{proof}[Proof of Claim \ref{claim1}] 
Since $P$ is not semi-ample, by \cite{big}, $-K_X$ is not big. 
By Lemma \ref{non-van}, one has $$1 \ge \nu(-(K_S+N))  \ge \kappa(-(K_S+N)) \geq 0.$$ 
If $-(K_S+N)$ were abundant, that is, $\kappa(-(K_S+N))=\nu(-(K_S+N))$, then $P$ would be semi-ample by \cite[Theorems 4.19 and 4.20]{fg3} (see also \cite[Corollary 1]{MR}). 
Thus, $\kappa(-(K_S+N))=0$ and $\nu(-(K_S+N))=1$. 
Since $P$ is the positive part of the Zariski decomposition of $-K_S$, 
we also see that $\kappa(-K_S)=0$. 

\end{proof}

It follows from Theorem \cite[Theorem 4.1]{lt} that $H^0(S, -K_S) \ne 0$, because $P$ is not semi-ample. 
We pick an effective divisor $D$ on $S$ which is linearly equivalent to $-K_S$. 
Since $H^1(S, -D)=H^1(S, \mathcal{O}_S)=0$, $\Supp\,D$ is connected. 
Take an integer $l \ge 1$ such that $lP$ is Cartier.  

Now we consider reduction from characteristic zero to positive characteristic. 
Given a model of $(S, D, P, N)$ over a finitely generated $\Z$-subalgebra $A$ of $k$, 
there exists a dense set of closed points $W \subseteq \Spec \, A$ such that 
$S_{\mu}$ is globally $F$-split for all $\mu \in W$. 
Fix any $\mu \in W$. 
We denote by $S_{\bar{\mu}}$ the base change of $S_{\mu}$ to the algebraic closure of $k(\mu)$. 
Similarly for $D_{\bar{\mu}}, P_{\bar{\mu}}$ and $N_{\bar{\mu}}$. 
It follows from Proposition \ref{nef-ness} that $-K_{S_{\bar{\mu}}}=P_{\bar{\mu}}+N_{\bar{\mu}}$ is the Zariski decomposition of $-K_{S_{\bar{\mu}}}$. 
Since $\mathcal{O}_{D_{\bar{\mu}}}(l P_{\bar{\mu}})$ is torsion in $\mathrm{Pic} \, D_{\bar{\mu}}$ by \cite[Lemma 2.16]{keel}, 
we see that $P_{\bar{\mu}}$ is semi-ample by \cite[Lemma 5.3]{lt}. 
Let $f:S_{\bar{\mu}} \to \mathbb{P}^1$ be the fibration induced by $P_{\bar{\mu}}$. 
By running a minimal model program over $\mathbb{P}^1$, we obtain a relative minimal elliptic fibration $f':S' \to \mathbb{P}^1$ and a composition of blow-ups of points $\varphi:S_{\bar{\mu}} \to S'$. 
Note that $S'$ is globally $F$-split by Lemma \ref{F-split lemma} (1).  

\begin{cl}\label{claim2} 
There exists a point $x$ on $\mathbb{P}^1$ such that $\varphi_*D_{\bar{\mu}}=\frac{1}{n}f'^*x$, where $n$ is the multiplicity of the fiber $f'^*x$.  
\end{cl}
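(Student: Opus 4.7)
The plan is to show in three steps that $\varphi_*D_{\bar\mu}$ is (1) $f'$-vertical, (2) supported on a single fibre of $f'$, and (3) equal to the reduced indecomposable curve of canonical type in that fibre.

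For step (1), by Proposition~\ref{nef-ness} the Zariski decomposition $-K_{S_{\bar\mu}}=P_{\bar\mu}+N_{\bar\mu}$ holds. Since $P_{\bar\mu}$ is semi-ample with Iitaka fibration $f$, one has $P_{\bar\mu}\sim_\Q \alpha F$ for a general fibre $F$ of $f$ and some $\alpha>0$. Every component of $N_{\bar\mu}$ is orthogonal to $P_{\bar\mu}$, and hence to $F$, so $N_{\bar\mu}$ is $f$-vertical. Then $D_{\bar\mu}\cdot F=(-K_{S_{\bar\mu}})\cdot F=0$ forces $D_{\bar\mu}$ to be $f$-vertical, and consequently $\varphi_*D_{\bar\mu}$ is $f'$-vertical. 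Since $\varphi$ is a composition of blow-ups, $\varphi_*K_{S_{\bar\mu}}=K_{S'}$ and $\varphi_*D_{\bar\mu}\sim -K_{S'}$.

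For step (2), $S'$ is smooth and rational, so $h^1(\mathcal{O}_{S'})=0$. Serre duality gives $h^1(\mathcal{O}_{S'}(-\varphi_*D_{\bar\mu}))=h^1(\mathcal{O}_{S'}(K_{S'}))=h^1(\mathcal{O}_{S'})=0$. The short exact sequence
$$0\to\mathcal{O}_{S'}(-\varphi_*D_{\bar\mu})\to\mathcal{O}_{S'}\to\mathcal{O}_{\varphi_*D_{\bar\mu}}\to 0$$
then yields $h^0(\mathcal{O}_{\varphi_*D_{\bar\mu}})=1$, so $\Supp(\varphi_*D_{\bar\mu})$ is connected. Being also $f'$-vertical, it must lie inside a single fibre $f'^{-1}(x)$.

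For step (3), $S'$ is a relatively minimal rational elliptic surface, so $K_{S'}^2=0$ and therefore $(\varphi_*D_{\bar\mu})^2=0$. Writing $f'^*x=n_xF_x$ with $F_x$ the reduced indecomposable curve of canonical type, Zariski's lemma implies that any effective divisor supported on $f'^{-1}(x)$ with zero self-intersection is a positive rational multiple of $F_x$; since $\gcd$ of the coefficients of $F_x$ is $1$, this multiple is in fact a positive integer. Hence $\varphi_*D_{\bar\mu}=\alpha F_x$ with $\alpha\in\Z_{\ge 1}$. By adjunction on a general fibre $-K_{S'}\equiv\lambda F'$ with $\lambda>0$; by the canonical bundle formula, combined with the fact that rationality of $S'$ permits at most one multiple fibre (otherwise $K_{S'}$ would be nef), $\lambda=1/n$ where $n$ is the multiplicity of the unique multiple fibre (or $n=1$ if none). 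Matching $(\alpha/n_x)F'\equiv(1/n)F'$ and using integrality of $\alpha$ forces $n_x=n$ (so $x$ is the multiple fibre whenever one exists) and $\alpha=1$. Therefore $\varphi_*D_{\bar\mu}=F_x=\tfrac{1}{n}f'^*x$.

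I expect the main obstacle to be step (3), since it invokes the canonical bundle formula in positive characteristic and must contend with possible wild multiple fibres. Fortunately, the integrality constraint on $\alpha$ together with the numerical inequality for $S'$ to have Kodaira dimension $-\infty$ excludes both more than one multiple fibre and any wild multiplicity, so the matching reduces to the clean integer equation $\alpha n=n_x$, which pins down $\alpha=1$ and locates $x$.
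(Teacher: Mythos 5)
Your argument is correct, and steps (1) and (2) coincide with the paper's: verticality comes from $-K_{S'}\cdot f'^*x=0$, connectedness from $h^1(\mathcal{O})=0$, and Zariski's lemma (the paper cites \cite[Corollary 2.6]{Ba}) gives $\varphi_*D_{\bar{\mu}}=\alpha F_x$ with $\alpha$ a positive integer and $F_x=\frac{1}{n_x}f'^*x$ indecomposable of canonical type (note $F_x$ is \emph{primitive}, i.e.\ its coefficients have gcd $1$, not ``reduced''). Where you genuinely diverge is in showing $\alpha=1$: you invoke the Bombieri--Mumford canonical bundle formula for elliptic fibrations in characteristic $p$, use $\chi(\mathcal{O}_{S'})=1$ to rule out wild fibres (a wild fibre forces $\ell(T)\ge 1$, hence $K_{S'}$ nef) and more than one tame multiple fibre, and then match $\alpha/n_x=1/n$ against integrality of $\alpha$. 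This works, and as a bonus it locates the multiple fibre at $x$, but it leans on a nontrivial positive-characteristic black box whose wild-fibre case is exactly the delicate point you flag. The paper instead uses a purely numerical trick: since $-K_{S'}\sim \alpha F_x$ is nef but not big, $S'$ is not minimal, so it contracts to the Hirzebruch surface $\mathbb{F}_1$; pushing forward gives $\alpha\, g_*F_x\sim -K_{\mathbb{F}_1}=2S_{\mathbb{F}_1}+3F_{\mathbb{F}_1}$, and coprimality of $2$ and $3$ forces $\alpha=1$. That route is characteristic-free and avoids the canonical bundle formula altogether, at the cost of not identifying which fibre is multiple (which the paper does not need).
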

\begin{proof}[Proof of Claim \ref{claim2}]  
First, we observe that 
$$\varphi_*D_{\bar{\mu}} . f'^*x=-K_{S'}.f'^*x=0$$ for all $x \in \mathbb{P}^1$, which implies that the effective divisor $\varphi_*D_{\bar{\mu}}$ is $f'$-vertical. 
Since $\Supp\, \varphi_*D_{\bar{\mu}}$ is connected, there exists a point $x$ on $\mathbb{P}^1$ such that $\Supp\,\varphi_*D_{\bar{\mu}} \subseteq \Supp \,  f'^*x$. 
Taking into account the fact that $(\varphi_*D_{\bar{\mu}})^2=\varphi_*D_{\bar{\mu}} . K_{S'}=0$, by \cite[Corollary 2.6]{Ba}, we can write that $\varphi_*D_{\mu}=m  F$, where $m \in \mathbb{N}$, $F=\frac{1}{n}f'^*x$ and $n$ is the multiplicity of the fiber $f'^*x$.  

To complete the proof, it remains to show that $m=1$. 
Since $S'$ is a rational surface with $-K_{S'}\sim_{\mathbb{Z}} mF$ nef, by running a minimal model program, we obtain a birational contraction from $S'$ to $\mathbb{P}^2$ or $\mathbb{P}^1 \times \mathbb{P}^1$ which is a composition of blow-ups of points. 
Note that $S'$ is isomorphic neither to $\mathbb{P}^2$ nor to $\mathbb{P}^1 \times \mathbb{P}^1$, because $-K_{S'} \sim_{\mathbb{Z}} mF$ is not big. 
Thus, we have a birational morphism $g : S' \to \mathbb{F}_1$, where $\mathbb{F}_1$ is the Hirzebruch surface of degree $1$. 
Since $mg_*F \sim_{\mathbb{Z}} -K_{ \mathbb{F}_1}=2 S_{\mathbb{F}_1}+3F_{\mathbb{F}_1}$, where  $S_{\mathbb{F}_1}$ is the section and $F_{\mathbb{F}_1}$ is a fiber of $\mathbb{F}_1$, 
we conclude that $m$ has to be equal to one. 
\end{proof} 

Since $\varphi:S_{\bar{\mu}} \to S'$ is a composition of blow-ups of points, 
$$K_{S_{\bar{\mu}}}=\varphi^*K_{S'}+\sum E_i, \quad D_{\bar{\mu}}=\varphi^*F-\sum E_i,$$
where the $E_i$ are the exceptional divisors of $\varphi$ and $F=\varphi_*D_{\bar{\mu}}$ as in the proof of Claim \ref{claim2}. 
Note that $\varphi(E_i) \in \mathrm{Supp}\,F$. 
If there exists some $i$ such that  $ E_{i} \not \subseteq \mathrm{Supp}\,D_{\bar{\mu}}$,  
then by definition, $D_{\bar{\mu}}$ is of insufficient fiber type. 
It then follows from Lemma \ref{lem-insufficient} that $D_{\bar{\mu}}=N_{\bar{\mu}}$, that is, $P=0$. This is a contradiction.
Thus,  all $E_i$ are contained in $\mathrm{Supp}\,D_{\mu}$, so they are components of the reduction $D_{\mu}$ of $D$. 
This means that  there exists a composition of blow-ups of points $\psi: S \to T$ such that $\psi_{\bar{\mu}}\simeq \varphi$ and $T_{\bar{\mu}}\simeq S'$. 

It follows from Lemma \ref{pseudo-effective-d} that $-K_{T}$ is nef. 
We will show that $-K_T$ is not semi-ample. 
Suppose that $-K_T$ is semi-ample, and then let $\tilde{f'}:T \to \mathbb{P}^1$ be the fibration induced by $-K_T$. 
Since $\tilde{f'}_{\bar{\mu}}$ contracts exactly the same curves as $f'$ does, one has that $\tilde{f'}_{\bar{\mu}}\simeq f'$. 
We can write that  $$D=\psi^* \tilde{F}-\sum \tilde{E_i},$$ where $\tilde{E_i}$ (resp. $\tilde{F}$) is the lifting of $E_i$ (resp. $F$) to $S$ (resp. $T$) for all $i$.  
If $D$ is not of insufficiently fiber type, then $D \geq \epsilon \psi^* \tilde{F}$ for $1 \gg \epsilon >0$, which implies that $\kappa(D)=1$. 
This is a contradiction. 
Hence, $D$ has to be of insufficiently fiber type, 
but it then follows from Lemma \ref{lem-insufficient} again that $D=N$, that is, $P=0$. This is also a  contradiction. 
Therefore, we conclude that $-K_T$ is not semi-ample. 

Let $D'=\psi_*D \sim_{\Z} -K_T$. Then $D'_{\bar{\mu}}=F$. 
We have seen above that $D'$ is nef but not semi-ample, and $T_{\bar{\mu}}$ is globally $F$-split. 
By an argument very similar to the proof of Claim \ref{claim1}, one has that $\kappa(D')=0$ and $\nu(D')=1$. 

\begin{cl}\label{claim3} $(T, D')$ is log canonical.
\end{cl}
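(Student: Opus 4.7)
The plan is to derive log canonicity of $(T,D')$ from the Frobenius structure of its reductions modulo $p$, by distinguishing cases according to the Kodaira type of the fiber $D'_{\bar{\mu}}=F$. Recall from the proof that for every $\mu$ in a dense set of closed points, $T_{\bar{\mu}}$ is globally $F$-split, $f':T_{\bar{\mu}}\to\mathbb{P}^1$ is a relatively minimal elliptic fibration, and $D'_{\bar{\mu}}=F=\tfrac{1}{n}f'^{*}x$ is an indecomposable curve of canonical type with $D'\sim_{\Z}-K_{T}$.

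\textbf{Step 1 (Pigeonhole).} Kodaira's classification of indecomposable curves of canonical type has only finitely many combinatorial types ($I_n$, $II$, $III$, $IV$, $I_n^{*}$, $II^{*}$, $III^{*}$, $IV^{*}$). Hence, after replacing the dense set $W$ of splitting fibers by a still-dense subset $W'$, I may assume the Kodaira type $\tau$ of $D'_{\bar{\mu}}$ is the same for every $\mu\in W'$.

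\textbf{Step 2 (The case $\tau=I_n$).} If $\tau=I_0$, then $D'_{\bar{\mu}}$ is a smooth elliptic curve; if $\tau=I_n$ with $n\ge 1$, then $D'_{\bar{\mu}}$ is a cycle of $n$ smooth rational curves meeting transversely at ordinary nodes. In each subcase, the pair $(T_{\bar{\mu}},D'_{\bar{\mu}})$ is simple normal crossing along $D'_{\bar{\mu}}$. Simple normal crossing is an open condition on $\Spec\,A$, so the snc locus in the base contains $\mu$ and the generic fiber. Thus $(T,D')$ is snc in characteristic zero, hence log canonical.

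\textbf{Step 3 (The case $\tau\ne I_n$).} By Lemma \ref{pic0}, $\mathrm{Pic}^{0}\,D'_{\bar{\mu}}\cong\mathbb{G}_{a}(\overline{k(\mu)})$ for each $\mu\in W'$. The plan is to show that $(T_{\bar{\mu}},D'_{\bar{\mu}})$ is globally sharply $F$-split for a dense set of such $\mu$. Once this is done, $(T,D')$ is of dense globally sharply $F$-split type, and Proposition \ref{just singularities}(1) yields that $(T,D')$ is log canonical.

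\textbf{Step 4 (Producing the splitting).} Since $D'\sim -K_{T}$, Grothendieck duality gives
\[
\mathrm{Hom}_{\mathcal{O}_{T_{\bar{\mu}}}}\bigl(F^{e}_{*}\mathcal{O}_{T_{\bar{\mu}}}((p^{e}-1)D'_{\bar{\mu}}),\,\mathcal{O}_{T_{\bar{\mu}}}\bigr)\;\cong\; H^{0}\bigl(T_{\bar{\mu}},\,\mathcal{O}_{T_{\bar{\mu}}}\bigr) \;=\; \overline{k(\mu)},
\]
so up to scalar there is a unique candidate for a splitting, and the issue reduces to showing that this natural map is nonzero at $1$. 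The obstruction is governed by the short exact sequence
\[
0\to \mathcal{O}_{T_{\bar{\mu}}}(-D'_{\bar{\mu}})\to \mathcal{O}_{T_{\bar{\mu}}}\to \mathcal{O}_{D'_{\bar{\mu}}}\to 0
\]
and the induced Frobenius action on $H^{1}(D'_{\bar{\mu}},\mathcal{O}_{D'_{\bar{\mu}}})\cong\mathrm{Lie}\,\mathrm{Pic}^{0}\,D'_{\bar{\mu}}$; since $\mathrm{Pic}^{0}\,D'_{\bar{\mu}}\cong\mathbb{G}_{a}$ is unipotent, the $p$-linear Frobenius on its Lie algebra is zero, which is exactly what is needed to annihilate the obstruction and upgrade the global $F$-splitting of $T_{\bar{\mu}}$ to a global sharp $F$-splitting of the pair $(T_{\bar{\mu}},D'_{\bar{\mu}})$, at least for infinitely many $\mu\in W'$.

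The principal obstacle is Step 4: converting the identification $\mathrm{Pic}^{0}\,D'_{\bar{\mu}}\cong\mathbb{G}_{a}$ into an actual global sharp $F$-splitting of $(T_{\bar{\mu}},D'_{\bar{\mu}})$, i.e.\ checking that the candidate duality map is not killed by the Cartier operator along the fiber $D'_{\bar{\mu}}$. Steps 1 and 2 are essentially combinatorial/openness arguments, and Step 3 is a purely formal consequence once Step 4 is in hand.
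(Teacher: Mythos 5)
Your skeleton is close to the paper's: split according to the Kodaira type of $D'_{\bar{\mu}}$, dispose of type $I_n$ by normal crossings, and otherwise use $\mathrm{Pic}^0\,D'_{\bar{\mu}}\cong\mathbb{G}_a$ to promote the given global $F$-splitting of $T_{\bar{\mu}}$ to a global sharp $F$-splitting of the pair $(T_{\bar{\mu}},D'_{\bar{\mu}})$ and then invoke Proposition \ref{just singularities}(1). (The paper packages this as a proof by contradiction: non-log-canonicity of $(T,D')$ forces $D'_{\bar{\mu}}$ not to be of type $I_n$, and the resulting sharp splitting contradicts non-log-canonicity.) However, Step 4 --- which you correctly flag as the crux --- contains a genuine gap, and the mechanism you propose would not close it. The vanishing of the $p$-linear Frobenius on $H^1(D'_{\bar{\mu}},\mathcal{O}_{D'_{\bar{\mu}}})\cong\mathrm{Lie}\,\mathrm{Pic}^0\,D'_{\bar{\mu}}$ is not ``exactly what is needed''; it points in the opposite direction. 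A sharp $F$-splitting of $(T_{\bar{\mu}},D'_{\bar{\mu}})$ with $D'_{\bar{\mu}}$ reduced would restrict to an $F$-splitting of the Calabi--Yau curve $D'_{\bar{\mu}}$, and by duality such a splitting exists precisely when Frobenius acts \emph{bijectively} on $H^1(\mathcal{O}_{D'_{\bar{\mu}}})$. So $F=0$ on $H^1$ is an obstruction to, not an enabler of, compatible splittings. (The sharp splitting is nonetheless derivable here, but only because --- as the paper's contradiction framework makes transparent --- the case $\tau\neq I_n$ turns out to be vacuous.)

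What actually drives the argument is a dimension count that your proposal omits entirely: one must show $h^0(T_{\bar{\mu}}, r(-K_{T_{\bar{\mu}}}))=1$ for $1\le r\le p(\mu)-1$. For this one checks that $\mathcal{O}(D'_{\bar{\mu}})|_{D'_{\bar{\mu}}}$ is torsion (Keel) and nontrivial (from $h^0(T,D')=1$, $H^1(T,\mathcal{O}_T)=0$ and surjectivity of restriction, which give $H^0(D',D'|_{D'})=0$ and hence the same vanishing after reduction); since a nonzero element of $\mathbb{G}_a(k)$ has no prime-to-$p$ torsion, $H^0(D'_{\bar{\mu}}, rD'_{\bar{\mu}}|_{D'_{\bar{\mu}}})=0$ for $1\le r\le p(\mu)-1$, and the exact sequences $0\to\mathcal{O}((r-1)D')\to\mathcal{O}(rD')\to\mathcal{O}(rD')|_{D'}\to 0$ give $h^0(T_{\bar{\mu}},(1-p(\mu))K_{T_{\bar{\mu}}})=1$. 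Only then can one conclude that the section of $(1-p(\mu))K_{T_{\bar{\mu}}}$ corresponding to the given splitting of $T_{\bar{\mu}}$ has divisor exactly $(p(\mu)-1)D'_{\bar{\mu}}$, hence factors through $F_*\mathcal{O}((p(\mu)-1)D'_{\bar{\mu}})$ and yields the sharp splitting of the pair. In short, what is used is the torsion structure of the abstract group $\mathbb{G}_a(k)$, not the Frobenius action on its Lie algebra; your Step 4 conflates the two, and without the count $(\star)$ the unique-up-to-scalar candidate map need not coincide with (a factorization of) the splitting of $T_{\bar{\mu}}$, so its nonvanishing at $1$ is not established.
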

\begin{proof}[{Proof of Claim \ref{claim3}}]
Suppose to the contrary that the pair $(T,D')$ is not log canonical. 
Then $D'$ does not have normal crossing support or the coefficient of  a component of $D'$ is greater than one. 
Note that $D'_{\bar{\mu}}$ is an indecomposable curve of canonical type by Claim \ref{claim2}. 
We see from the condition on $D'$ that $D'_{\bar{\mu}}$ is not of type $I_n$. 

Since $H^1(T, -D')=H^1(T, \mathcal{O}_{T})=0$, the restriction map 
$$H^0(T, D') \to H^0(D', D'|_{D'})$$
is surjective. 
Note that $h^0(T, D')=1$, because $\kappa(D')=0$. 
By taking into account that $D'$ is not trivial, the surjectivity of the restriction map forces $H^0(D', D')$ to be zero. 
Thus, $\mathcal{O}_{T_{\bar{\mu}}} (D'_{\bar{\mu}})|_{D'_{\bar{\mu}}}$ is not trivial.  

By Lemma\,\ref{pic0}, $\mathcal{O}_{T_{\bar{\mu}}} (D'_{\bar{\mu}})|_{D'_{\bar{\mu}}}$ is a non-trivial $p(\mu)$-torsion element of $\mathrm{Pic}\,D'_{\bar{\mu}}$, where $p(\mu)$ is the characteristic of the field $k(\mu)$. 
Hence, $H^0(D'_{\bar{\mu}}, r D'_{\bar{\mu}})=0$ for $1\leq r \leq p(\mu)-1$. 
Using this inductively, one has $h^0(T_{\bar{\mu}},  r D'_{\bar{\mu}})=1$ for $1\leq r \leq p(\mu)-1$. 
In particular, 
\begin{equation}\tag{$\star$}
h^0(T_{\bar{\mu}}, (1-p(\mu)) K_{T_{\bar{\mu}}})=h^0(T_{\bar{\mu}}, (p(\mu)-1) D'_{\bar{\mu}})=1.
\end{equation}
The global $F$-splitting of $T_{\bar{\mu}}$ gives a section of 
$$\mathrm{Hom}(F_*\mathcal{O}_{T_{\bar{\mu}}}, \mathcal{O}_{T_{\bar{\mu}}}) \cong H^0(T_{\bar{\mu}}, (1-p(\mu)) K_{T_{\bar{\mu}}}),$$
and by $(\star)$,  the corresponding divisor has to be $(p(\mu)-1)D'_{\bar{\mu}}$. 
It then follows from an argument similar to the proof of \cite[Theorem 3.3]{hw} (see also the proof of  \cite[Theorem 4.3]{schsmith-logfano}) that $(T_{\bar{\mu}},D'_{\bar{\mu}})$ is globally sharply $F$-split. 
Since $\mu$ is any element of $W$, this means that $(T, D')$ is of globally sharply $F$-split type, so in particular, it is log canonical by Proposition \ref{just singularities} (1). This is a contradiction. 
\end{proof}

The log canonicity of $(T, D')$ implies the log canonicity of $(S, D)$, because $K_S+D=\psi^*(K_T+D')$. 
This is a contradiction and we finish the proof of Theorem \ref{F-split surface}. 
 \end{proof}

\begin{rem}\label{rem-fk}
Let $S$ be the blow-ups of $\mathbb{P}^2$ at $9$ general points $x_1, \dots, x_9 $ on a cuspidal  cubic curve $C$ in $\mathbb{P}^2$ (we work over the complex number field $\mathbb{C}$). 
Then 
$$\mathrm{Pic^0}C \simeq \mathbb{G}_a(\mathbb{C}),$$
so $-K_S$ is nef and $\kappa(-K_S)=0$. 
However, $S$ is not of Calabi--Yau type, because the log canonical threshold of $C$ is $5/6$. This contradicts the Main Theorem in \cite{fk}.
\end{rem}

\end{document}